\providecommand{\U}[1]{\protect\rule{.1in}{.1in}}
\newtheorem{theorem}{Theorem}
\newtheorem{definition}[theorem]{Definition}
\newtheorem{lemma}[theorem]{Lemma}
\newtheorem{proposition}[theorem]{Proposition}
\newtheorem{remark}[theorem]{Remark}
\begin{document}

\title[Generalized Riesz Potentials on the Variable Hardy Spaces]{Boundedness of Generalized Riesz Potentials on the Variable Hardy Spaces}
\author{Pablo Rocha}
\address{Universidad Nacional del Sur, INMABB (Conicet), Bah\'{\i}a Blanca, 8000 Buenos
Aires, Argentina.}
\email{pablo.rocha@uns.edu.ar}
\thanks{\textbf{Key words and phrases}: Variable Hardy Spaces, Fractional Operators.}
\thanks{\textbf{2.010 Math. Subject Classification}: 42B25, 42B35.}
\thanks{Partially supported by SecytUNC and UNS}
\maketitle

\begin{abstract}
We study the boundedness from $H^{p(.)}(\mathbb{R}^{n})$ into $L^{q(.)}(\mathbb{R}^{n})$ of certain generalized Riesz potentials and the boundedness from $H^{p(.)}(\mathbb{R}^{n})$ into $H^{q(.)}(\mathbb{R}^{n})$ of the Riesz potential, both results are achieved via the finite atomic decomposition developed in \cite{Uribe}.
\end{abstract}

\section{Introduction}

The Lebesgue spaces with variable exponents are a generalized of the classical Lebesgue spaces, replacing the constant exponent $p$ with a variable exponent function $p(\cdot)$. In the last 20 years, the variable Lebesgue spaces have received considerable attention for their applications in fluid dinamics, elasticity
theory and differential equations with non-standard growth conditions, see \cite{diening2} and references therein.

In the celebrated paper \cite{F-S}, C. Fefferman and E. Stein defined the Hardy spaces $H^{p}(\mathbb{R}^{n})$ for $0 < p < \infty$. One of the principal interests of $H^{p}$ theory is that the $L^{p}$ boundedness of certain integrals operators proved for $p >1$ extend to the context of $H^{p}$, for all $p > 0$.
In many cases, this is achieved by mean of the atomic decomposition of the elements in $H^{p}$ (see \cite{garcia}, \cite{Lu}, \cite{steinlibro}).

The theory of variable exponent Hardy spaces was developed independently by E. Nakai and Y. Sawano in \cite{nakai} and by D. Cruz-Uribe and D. Wang in \cite{Uribe}. Both theories prove equivalent definitions in terms of maximal operators using different approach. In \cite{nakai} and \cite{Uribe}, one of their main goals is the atomic decomposition of the elements in $H^{p(\cdot)}(\mathbb{R}^{n})$, as an application of such decomposition they proved the boundedness the certain singular integrals on  $H^{p(\cdot)}(\mathbb{R}^{n})$.

Let $0\leq \alpha <n$ and $m > 1$, ($m \in \mathbb{N}$), we consider the following generalization of Riesz potential
\begin{equation}
T_{\alpha, m}f(x)=\int_{\mathbb{R}^{n}} \left\vert x-A_{1}y\right\vert ^{-\alpha
_{1}}...\left\vert x-A_{m}y\right\vert ^{-\alpha _{m}}f(y)dy,  \label{T}
\end{equation}%
where $\alpha _{1}+...+\alpha _{m}=n-\alpha$, and $A_1, ..., A_m$ are $n \times n$ orthogonal matrices such that $A_i - A_j$ are invertible if $i \neq j$. We observe that in the case $%
\alpha >0,$ $m=1$ and $A_{1}=I,$ $T_{\alpha, 1}$ is the classical fractional integral
operator (also known as the Riesz potential) $I_{\alpha }.$ A interesting survey about fractional integrals can be founded in \cite{nakai0}.

With respect to classical Lebesgue or Hardy spaces, for the case $0 \leq \alpha < n$ and  $m>1,$ in
the paper \cite{rocha1}, the author jointly with M. Urciuolo proved the $H^{p}(\mathbb{R}^{n})-L^{q}(\mathbb{R}
^{n})$ boundedness of the operator $T_{\alpha, m}$ and we also showed that the $H^{p}(\mathbb{R})-H^{q}(\mathbb{R})$ boundedness cannot expect for $T_{\alpha, m}$ with $0 \leq \alpha < 1$, $m=2$, $A_1 = 1$, and $A_2=-1$.
This is an important difference with the case $0 < \alpha < n$ and $m=1.$ Indeed, in the paper \cite{T-W},
M. Taibleson and G. Weiss, using the molecular characterization of the real
Hardy spaces, obtained the boundedness of $I_{\alpha }$ from $H^{p}(\mathbb{R%
}^{n})$ into $H^{q}(\mathbb{R}^{n}),$ for $0<p\leq 1$ and $\frac{1}{q}=\frac{1}{p} - \frac{\alpha}{n}$. 

In \cite{capone}, it gives the boundedness of the Riesz potential from $L^{p(\cdot)}$ into  $L^{q(\cdot)}$ where $\frac{1}{q(\cdot)} = \frac{1}{p(\cdot)} - \frac{\alpha}{n}$. Y. Sawano obtains in \cite{sawano} the $H^{p(\cdot)}-H^{q(\cdot)}$ boundedness of the Riesz potential using a finite atomic decomposition different of given in \cite{Uribe}. In \cite{rocha2}, the author jointly with M. Urciuolo proved the $H^{p(\cdot)}-L^{q(\cdot)}$ boundedness of the operator $T_{\alpha, m}$ and the $H^{p(\cdot)}-H^{q(\cdot)}$ boundedness of the Riesz potential via the infinite atomic and molecular decomposition developed in \cite{nakai}.

The purpose of this article, it is give other proof of the results obtained in \cite{rocha2}, but now using the finite atomic decomposition developed in \cite{Uribe}. Here, a key tool is a weighted vector-valued inequality for the fractional maximal operator. We also rely on the theory of weighted Hardy spaces and on the Rubio de Francia iteration algorithm.

This method allows us to avoid the more delicate convergence arguments that are often neccesary when utilizing the infinite atomic decomposition.

\qquad

In Section 2 we give some basics results  about the variable Lebesgue spaces and the theory of weights. We also recall the definition and atomic decomposition of the variable Hardy spaces given in \cite{Uribe}. In Section 3 we state some auxiliary lemmas and propositions to get the main results of Section 4.

\qquad

\textbf{Notation:} We denote by $B(x_0, r)$ the ball centered at $x_0 \in \mathbb{R}^{n}$ of radius $r$. For a measurable subset $E \subset \mathbb{R}^{n}$ we denote $|E|$ and $\chi_E$ the Lebesgue measure of $E$ and the characteristic function of $E$ respectively. Given a real number $s \geq 0$,
we denote $\lfloor s \rfloor$ as the smallest integer $k$ with $k > s-1$. As usual we denote with $\mathcal{S}(\mathbb{R}^{n})$ the space os smooth and rapidly decreasing functions, with $\mathcal{S}'(\mathbb{R}^{n})$  the dual space. If $\beta$ is the multiindex $\beta=(\beta_1, ..., \beta_n)$, then
$|\beta| = \beta_1 + ... + \beta_n$.

Throughout this paper, $C$ will denote a positive constant, not necessarily the same at each occurrence.

\section{Preliminaries}

In this section, we give some definitions and some basics results about the variable Lebesgue spaces and the theory of weights.

Given a measurable function $p(\cdot) : \mathbb{R}^{n} \rightarrow (0, \infty)$ such that $$0 < ess \inf_{x \in \mathbb{R}^{n}} p(x) \leq ess \sup_{x \in \mathbb{R}^{n}} p(x) < \infty,$$
let $L^{p(\cdot)}(\mathbb{R}^{n})$ denote the space of all measurable functions $f$ such that for some $\lambda > 0$,
$$ \int_{\mathbb{R}^{n}} \, \left| \frac{f(x)}{\lambda} \right|^{p(x)} \, dx < \infty.$$
We set $$\| f \|_{p(\cdot)} = \inf \left\{ \lambda > 0 : \int_{\mathbb{R}^{n}} \, \left| \frac{f(x)}{\lambda} \right|^{p(x)} \, dx \leq 1 \right\}.$$
We see that $\left( L^{p(\cdot)}(\mathbb{R}^{n}), \| . \|_{p(\cdot)} \right)$ is a quasi normed space.

Here we adopt the standard notation in variable exponents. We write
$$p_{-}= ess \inf_{x \in \mathbb{R}^{n}} p(x), \,\,\,\,\,\, p_{+} = ess \sup_{x \in \mathbb{R}^{n}} p(x), \,\,\,\, \text{and} \,\,\,\, \underline{p}=\min \left\{ p_{-},1\right\}.$$
From now on we assume $0 < p_{-} \leq p_{+} < \infty$. It not so hard to see the following

\begin{lemma} The following statements hold 

$1.$ $\| f \|_{p(\cdot)} \geq 0$, and $\| f \|_{p(\cdot)}=0$ if and only if $f \equiv 0$.

$2.$ $\| c \, f \|_{p(\cdot)} = |c| \, \| f \|_{p(\cdot)}$ for $c \in \mathbb{C}$.

$3.$ $\| f + g \|_{p(\cdot)}^{\underline{p}} \leq \| f \|_{p(\cdot)}^{\underline{p}} + \| g \|_{p(\cdot)}^{\underline{p}}$.

$4.$ $\| |f|^{s} \|_{p(\cdot)} = \| f \|^{s}_{sp(\cdot)}$, for all $s > 0$.

$5.$ If $A$ is an $n \times n$ orthogonal matrix and $p(Ax)=p(x)$ for all $x \in \mathbb{R}^{n}$, then
$\| f_{A} \|_{p(.)} = \| f \|_{p(.)},$ where $f_{A}(x) = f(A^{-1}x)$.
\end{lemma}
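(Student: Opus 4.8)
The plan is to fix the modular $\rho(h) = \int_{\mathbb{R}^{n}} |h(x)|^{p(x)}\,dx$ and to use throughout that $\|f\|_{p(\cdot)} = \inf\{\lambda > 0 : \rho(f/\lambda) \le 1\}$, so that each assertion reduces either to a change of variables inside $\rho$ or to a pointwise convexity estimate for the integrand. Two elementary facts will be invoked repeatedly. First, \emph{monotonicity}: if $|f| \le |g|$ a.e.\ then $\rho(f/\lambda) \le \rho(g/\lambda)$ for every $\lambda > 0$, hence $\|f\|_{p(\cdot)} \le \|g\|_{p(\cdot)}$. Second, the \emph{unit-ball property}: if $0 < \|f\|_{p(\cdot)} < \infty$ then $\rho\!\big(f/\|f\|_{p(\cdot)}\big) \le 1$; this follows by choosing $\lambda_{k} \downarrow \|f\|_{p(\cdot)}$ with $\rho(f/\lambda_{k}) \le 1$ and letting $k \to \infty$ in $\rho(f/\lambda_{k}) \uparrow \rho\!\big(f/\|f\|_{p(\cdot)}\big)$ via the monotone convergence theorem.

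With these in hand, parts $(1)$, $(2)$, $(4)$ and $(5)$ are routine. For $(1)$, nonnegativity is immediate since the infimum is over positive numbers; $f \equiv 0$ gives $\rho(0/\lambda) = 0 \le 1$ for all $\lambda$, so $\|0\|_{p(\cdot)} = 0$, while if $\|f\|_{p(\cdot)} = 0$ one picks $\lambda_{k} \downarrow 0$ with $\rho(f/\lambda_{k}) \le 1$, and if $|\{f \ne 0\}| > 0$ then $|f/\lambda_{k}|^{p(x)} \to \infty$ on that set, so Fatou's lemma forces $\rho(f/\lambda_{k}) \to \infty$, a contradiction. For $(2)$ with $c \ne 0$, the substitution $\mu = \lambda/|c|$ in $\rho(cf/\lambda) = \rho\!\big(f/(\lambda/|c|)\big)$ yields $\|cf\|_{p(\cdot)} = |c|\,\|f\|_{p(\cdot)}$; the case $c = 0$ uses $(1)$. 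For $(4)$, the substitution $\lambda = \mu^{s}$ turns $\int (|f(x)|^{s}/\lambda)^{p(x)}\,dx \le 1$ into $\int |f(x)/\mu|^{s p(x)}\,dx \le 1$, and since $\mu \mapsto \mu^{s}$ is an increasing bijection of $(0,\infty)$ onto itself, the corresponding infima satisfy $\||f|^{s}\|_{p(\cdot)} = \|f\|_{s p(\cdot)}^{s}$. For $(5)$, orthogonality gives $|\det A| = 1$, so the change of variables $y = A^{-1}x$ produces $\rho(f_{A}/\lambda) = \int |f(y)/\lambda|^{p(Ay)}\,dy = \int |f(y)/\lambda|^{p(y)}\,dy = \rho(f/\lambda)$ using $p(Ay) = p(y)$; thus the two sets $\{\lambda : \rho(\cdot/\lambda) \le 1\}$ coincide and so do the norms.

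The only substantive point is $(3)$, the quasi-triangle inequality with exponent $\underline{p} = \min\{p_{-},1\}$. I would assume $a := \|f\|_{p(\cdot)}$ and $b := \|g\|_{p(\cdot)}$ are finite (otherwise there is nothing to prove) and both positive (if one vanishes the estimate is trivial by $(1)$), and set $\lambda := (a^{\underline{p}} + b^{\underline{p}})^{1/\underline{p}}$, $\theta_{1} := a^{\underline{p}}/(a^{\underline{p}} + b^{\underline{p}})$, $\theta_{2} := b^{\underline{p}}/(a^{\underline{p}} + b^{\underline{p}})$, so that $\theta_{1} + \theta_{2} = 1$, $a/\lambda = \theta_{1}^{1/\underline{p}}$ and $b/\lambda = \theta_{2}^{1/\underline{p}}$. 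From $|f+g|/\lambda \le \theta_{1}^{1/\underline{p}}(|f|/a) + \theta_{2}^{1/\underline{p}}(|g|/b)$ I would raise both sides to the power $p(x)$, splitting according to whether $p(x) \ge 1$ or $p(x) < 1$: on $\{p(x) \ge 1\}$ use the convexity of $t \mapsto t^{p(x)}$ together with $\theta_{i}^{1/\underline{p}} \le \theta_{i}$ (valid because $1/\underline{p} \ge 1$ and $\theta_{i} \le 1$); on $\{p(x) < 1\}$ (which can only occur when $\underline{p} = p_{-}$) use the subadditivity $(u+v)^{p(x)} \le u^{p(x)} + v^{p(x)}$ and $(\theta_{i}^{1/\underline{p}})^{p(x)} = \theta_{i}^{p(x)/p_{-}} \le \theta_{i}$. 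In both cases one gets $(|f+g|/\lambda)^{p(x)} \le \theta_{1}(|f|/a)^{p(x)} + \theta_{2}(|g|/b)^{p(x)}$, so integrating and applying the unit-ball property to $f$ and $g$ gives $\rho\!\big((f+g)/\lambda\big) \le \theta_{1} + \theta_{2} = 1$, whence $\|f+g\|_{p(\cdot)} \le \lambda$, that is, $\|f+g\|_{p(\cdot)}^{\underline{p}} \le a^{\underline{p}} + b^{\underline{p}}$. (Alternatively, when $p_{-} < 1$ one could reduce $(3)$ to the norm case $p_{-} \ge 1$ by invoking $(4)$ with $s = \underline{p}$ and Minkowski's inequality in $L^{p(\cdot)/\underline{p}}$, but the direct argument is self-contained.) The main obstacle here is purely bookkeeping — tracking the two regimes of $p(x)$ and making sure every convexity/subadditivity step is applied with weights summing to at most one; no input beyond Fatou, monotone convergence, and these elementary scalar inequalities is required.
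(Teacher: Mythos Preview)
Your proof is correct and complete. The paper does not actually prove this lemma --- it is stated without proof after the remark ``It [is] not so hard to see the following'' --- so there is no original argument to compare against; your self-contained treatment of all five parts, including the careful handling of the two regimes $p(x)\ge 1$ and $p(x)<1$ in part~(3), fills that gap cleanly.
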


Let $\mathcal{P}_0$ denote the collection of all measurable functions $p(.):\mathbb{R}^{n}\rightarrow \left( 0,\infty \right) $ such that $0 < p_{-} \leq p_{+} < \infty.$

Given $p(\cdot) \in \mathcal{P}_0$ with $p_{-} > 1$, define the conjugate exponent $p'(\cdot)$ by the equation $\frac{1}{p(\cdot)} + \frac{1}{p'(\cdot)}=1$.
\begin{lemma} $($See Theorem 2.34 in \cite{uribe}$)$ If $p(\cdot) \in \mathcal{P}_0$ with $p_{-} > 1$, then for all $f \in L^{p(\cdot)}$, we have
\[
\| f \|_{p(\cdot)} \leq C \sup \int_{\mathbb{R}^{n}} |f(x) g(x)| dx,
\]
where the supremum is taken over all $g \in L^{p'(\cdot)}$ such that $\| g \|_{p'(\cdot)} \leq 1$.
\end{lemma}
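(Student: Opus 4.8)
The plan is to establish the inequality directly, in fact with $C=1$, by exhibiting an explicit extremizer. Observe first that only the displayed estimate $\|f\|_{p(\cdot)} \le C \sup \int_{\mathbb{R}^n}|fg|$ is claimed here; the reverse inequality is the generalized H\"older inequality for variable exponents and is not needed. If $f \equiv 0$ there is nothing to prove, and since $f \in L^{p(\cdot)}$ we have $0 < \|f\|_{p(\cdot)} < \infty$; replacing $f$ by $f/\|f\|_{p(\cdot)}$ and invoking the homogeneity in part $2$ of Lemma~2.1, we may and do assume $\|f\|_{p(\cdot)} = 1$.

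The first ingredient is the norm--modular relation. Because $p_+ < \infty$, the condition $\|f\|_{p(\cdot)} = 1$ is equivalent to $\rho_{p(\cdot)}(f) := \int_{\mathbb{R}^n} |f(x)|^{p(x)}\,dx = 1$: the map $\lambda \mapsto \rho_{p(\cdot)}(f/\lambda)$ is non-increasing on $(0,\infty)$ and, by the dominated convergence theorem together with the bound $|f(x)/\lambda|^{p(x)} \le 2^{p_+}|f(x)|^{p(x)}$ valid for $\lambda$ near any fixed value, it is continuous there; hence the infimum defining the Luxemburg norm is attained, and continuity forces both $\rho_{p(\cdot)}(f) \le 1$ and $\rho_{p(\cdot)}(f) \ge 1$.

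For the extremizer, put $g(x) = |f(x)|^{p(x)-1}$, a measurable function that vanishes wherever $f$ does since $p(x)-1 \ge p_- - 1 > 0$. Multiplying $\frac{1}{p(x)} + \frac{1}{p'(x)} = 1$ by $p(x)p'(x)$ yields the pointwise identity $(p(x)-1)\,p'(x) = p(x)$, so
\[
\rho_{p'(\cdot)}(g) = \int_{\mathbb{R}^n} |f(x)|^{(p(x)-1)p'(x)}\,dx = \int_{\mathbb{R}^n} |f(x)|^{p(x)}\,dx = 1,
\]
whence $g \in L^{p'(\cdot)}$ and, taking $\lambda = 1$ in the definition of $\|\cdot\|_{p'(\cdot)}$, $\|g\|_{p'(\cdot)} \le 1$. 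On the other hand,
\[
\int_{\mathbb{R}^n} |f(x)\,g(x)|\,dx = \int_{\mathbb{R}^n} |f(x)|^{p(x)}\,dx = 1 = \|f\|_{p(\cdot)} .
\]
Thus the supremum over all admissible $g$ is at least $\|f\|_{p(\cdot)}$; undoing the normalization preserves this by homogeneity, and the lemma follows with $C = 1$.

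The only genuinely delicate point is the first ingredient, the passage from unit norm to unit modular, which is exactly where the hypothesis $p_+ < \infty$ (implicit in $p(\cdot) \in \mathcal{P}_0$) is used; the rest is bookkeeping with the pointwise conjugacy relation. Alternatively, one could deduce the statement from the general theory of associate (K\"othe dual) spaces, which identifies the associate space of $L^{p(\cdot)}$ with $L^{p'(\cdot)}$ up to equivalent norms, but the explicit extremizer above is shorter and keeps the constant equal to $1$.
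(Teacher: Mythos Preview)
Your argument is correct and even gives the sharp constant $C=1$. The paper itself does not supply a proof of this lemma; it merely records the statement and cites Theorem~2.34 in Cruz-Uribe--Fiorenza, so there is no in-paper argument to compare against. Your explicit-extremizer construction $g=|f|^{p(\cdot)-1}$ is precisely the standard proof of this norm-conjugate formula (and is what underlies the cited reference), so the approaches coincide.

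One small expository point: in your norm--modular step, the dominated convergence argument using the majorant $2^{p_+}|f|^{p(\cdot)}$ presupposes that $\rho_{p(\cdot)}(f)<\infty$, which you have not yet established at that moment. The clean order is to first obtain $\rho_{p(\cdot)}(f)\le 1$ by monotone convergence as $\lambda\downarrow 1$ (since $\rho_{p(\cdot)}(f/\lambda)\le 1$ for all $\lambda>1$), and only then invoke dominated convergence for $\lambda\uparrow 1$ using the now-integrable majorant to get $\rho_{p(\cdot)}(f)\ge 1$. This is a cosmetic reordering, not a gap.
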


Let $f$ be a locally integrable function on $\mathbb{R}^{n}$. The function
$$M(f)(x) = \sup_{B \ni x} \frac{1}{|B|} \int_{B} |f(y)| dy,$$
where the supremum is taken over all balls $B$ containing $x$, is called the uncentered Hardy-Littlewood maximal function of $f$.

Throughout, we will make use of the following class of exponents.
\begin{definition} Given $p(\cdot) \in \mathcal{P}_{0}$, we say $p(\cdot) \in M\mathcal{P}_0$ if there exists $p_0$, $0< p_0 < p_{-}$, such that $\| Mf \|_{\frac{p(\cdot)}{p_0}} \leq C \| f \|_{\frac{p(\cdot)}{p_0}}.$
\end{definition}

\begin{lemma} Given $p(\cdot) \in \mathcal{P}_0$, with $p_{-} >1$, if the maximal operator is bounded on $L^{p(\cdot)}$, then for every $s>1$ it is bounded on $L^{sp(\cdot)}$.
\end{lemma}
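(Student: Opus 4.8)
The plan is to deduce the boundedness of $M$ on $L^{sp(\cdot)}$ directly from its boundedness on $L^{p(\cdot)}$, using only two elementary ingredients: a pointwise comparison between $(Mf)^{s}$ and $M(|f|^{s})$, and the power--homogeneity of the quasi-norm recorded as item $4$ of the first lemma, namely $\| |h|^{s} \|_{p(\cdot)} = \|h\|_{sp(\cdot)}^{s}$ for every $s>0$.

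First I would record the pointwise inequality $\bigl(Mf(x)\bigr)^{s} \leq M(|f|^{s})(x)$, valid for every $x \in \mathbb{R}^{n}$ precisely because $s > 1$. Indeed, for any ball $B \ni x$, Jensen's inequality (convexity of $t \mapsto t^{s}$) gives
\[
\left( \frac{1}{|B|} \int_{B} |f(y)|\, dy \right)^{s} \leq \frac{1}{|B|} \int_{B} |f(y)|^{s}\, dy \leq M(|f|^{s})(x),
\]
and since $t \mapsto t^{s}$ is increasing, taking the supremum over all such $B$ on the left-hand side yields the claim. Note this is exactly the place where the special form $sp(\cdot)$ of the target exponent (a \emph{constant} multiple of $p(\cdot)$) is used; no monotonicity of the class of good exponents is needed.

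Next, for $f \in L^{sp(\cdot)}$, item $4$ of the first lemma shows $\| |f|^{s} \|_{p(\cdot)} = \|f\|_{sp(\cdot)}^{s} < \infty$, so $|f|^{s} \in L^{p(\cdot)}$ and the hypothesis is applicable to it. Using the monotonicity of $\|\cdot\|_{p(\cdot)}$ together with the pointwise bound above, and then item $4$ once more applied to $Mf \geq 0$, I would estimate
\[
\|Mf\|_{sp(\cdot)}^{s} = \| (Mf)^{s} \|_{p(\cdot)} \leq \| M(|f|^{s}) \|_{p(\cdot)} \leq C\, \| |f|^{s} \|_{p(\cdot)} = C\, \|f\|_{sp(\cdot)}^{s},
\]
and taking $s$-th roots gives $\|Mf\|_{sp(\cdot)} \leq C^{1/s} \|f\|_{sp(\cdot)}$, which is the assertion.

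I do not expect a genuine obstacle: the whole proof is the two displays above. The only points deserving a moment's care are verifying that $|f|^{s}$ indeed belongs to $L^{p(\cdot)}$ so that the assumed bound may be invoked — which is precisely the content of the power--homogeneity identity — and observing that, although $M$ does not commute with $h \mapsto |h|^{s}$, the single inequality $(Mf)^{s} \leq M(|f|^{s})$ is all that is required. (The hypothesis $p_{-} > 1$ plays no essential role in the argument beyond being a necessary condition for $M$ to be bounded on $L^{p(\cdot)}$ in the first place.)
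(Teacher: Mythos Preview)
Your proof is correct and is essentially the same as the paper's: the paper writes the chain $\|Mf\|_{sp(\cdot)} = \|(Mf)^{s}\|_{p(\cdot)}^{1/s} \leq \|M(|f|^{s})\|_{p(\cdot)}^{1/s} \leq C\|\,|f|^{s}\,\|_{p(\cdot)}^{1/s} = C\|f\|_{sp(\cdot)}$, citing item $4$ of Lemma~1 and (what it calls) H\"older's inequality for the pointwise bound $(Mf)^{s}\leq M(|f|^{s})$, which is exactly your Jensen step. Your write-up is simply a more detailed version of the same argument.
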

\begin{proof} From H\"older's inequality and Lemma 1, 4., we have $$\|Mf\|_{sp(\cdot)} = \| (Mf)^{s} \|_{p(\cdot)}^{1/s} \leq \| (M(|f|^{s}) \|_{p(\cdot)}^{1/s}
\leq C \| |f|^{s} \|_{p(\cdot)}^{1/s} = C \| f \|_{sp(\cdot)}.$$
\end{proof}

The following lemma is a deep result due to L. Diening (see Theorem 8.1 in \cite{diening}).

\begin{lemma} Given $p(\cdot) \in \mathcal{P}_0$, with $p_{-} >1$, the maximal operators is bounded on $L^{p(\cdot)}$ if and only if it is bounded on $L^{p'(\cdot)}�$.
\end{lemma}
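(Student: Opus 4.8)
The plan is not to prove this from scratch --- it is Diening's theorem \cite{diening} --- but to lay out its architecture and indicate where the real work lies. The key is to route through an auxiliary, Muckenhoupt--type condition on $p(\cdot)$ that is visibly unchanged when $p(\cdot)$ is replaced by $p'(\cdot)$.

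First I would bring in the averaging operators
\[
A_{B}f := \Big( \tfrac{1}{|B|}\int_{B}|f|\,dy \Big)\,\chi_{B},\qquad B\subset\mathbb{R}^{n}\ \text{a ball},
\]
and record, using H\"older's inequality for variable exponents together with the norm--conjugate estimate of Lemma 2, that
\[
\|A_{B}\|_{L^{p(\cdot)}\to L^{p(\cdot)}}\ \approx\ \frac{1}{|B|}\,\|\chi_{B}\|_{p(\cdot)}\,\|\chi_{B}\|_{p'(\cdot)} ,
\]
a quantity that is not altered if $p(\cdot)$ and $p'(\cdot)$ are interchanged. Diening's class $\mathcal{A}$ asks that the $A_{B}$ be uniformly bounded on $L^{p(\cdot)}$ together with a quantitative ``local--to--global'' decay for the numbers $\|\chi_{B}\|_{p(\cdot)}\|\chi_{B}\|_{p'(\cdot)}$; since every ingredient is phrased through that self--dual quantity, one gets at once that $p(\cdot)\in\mathcal{A}$ if and only if $p'(\cdot)\in\mathcal{A}$. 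Thus the lemma reduces to the single equivalence
\[
M \ \text{is bounded on}\ L^{p(\cdot)} \quad\Longleftrightarrow\quad p(\cdot)\in\mathcal{A}.
\]

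The forward direction is the soft one: from $A_{B}f\le (Mf)\chi_{B}\le Mf$ pointwise the $A_{B}$ are uniformly bounded on $L^{p(\cdot)}$ for free, and testing $\|Mh\|_{p(\cdot)}\le C\|h\|_{p(\cdot)}$ on functions with $\|h\|_{p(\cdot)}\le 1$, combined again with Lemma 2, forces the necessary estimate $\|\chi_{B}\|_{p(\cdot)}\|\chi_{B}\|_{p'(\cdot)}\le C|B|$; a further, more careful argument upgrades this to the full decay. The hard part --- and the place I expect the main obstacle --- is the converse: one runs a Calder\'on--Zygmund stopping--time decomposition of an arbitrary $f\in L^{p(\cdot)}$ at dyadic heights, dominates $Mf$ by a sum $\sum_{j}A_{B_{j}}f$ over essentially disjoint stopping balls, and must then reassemble these local $L^{p(\cdot)}$ estimates into the single global bound $\|Mf\|_{p(\cdot)}\le C\|f\|_{p(\cdot)}$. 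Controlling the overlap, and showing that membership in $\mathcal{A}$ (together with its self--improving, reverse--H\"older property) is exactly what makes this summation work, is the technical heart of \cite{diening}, and for those details I would simply refer to that paper. Chaining the equivalences,
\[
M\ \text{bounded on}\ L^{p(\cdot)} \iff p(\cdot)\in\mathcal{A}\iff p'(\cdot)\in\mathcal{A}\iff M\ \text{bounded on}\ L^{p'(\cdot)},
\]
then completes the argument.
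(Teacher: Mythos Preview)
The paper does not prove this lemma at all; it simply records it as ``a deep result due to L. Diening'' and cites Theorem~8.1 in \cite{diening}. Your sketch correctly outlines the architecture of Diening's proof --- routing through the self-dual class $\mathcal{A}$ and isolating the hard converse --- so in substance you are deferring to the same source the paper does, only with more explanation.
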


It is well known that a useful sufficient condition for the boundedness of the maximal operator is log-H\"older continuity, (see \cite{uribe}, \cite{diening2}). In our main results we only will assume that the exponents $p(\cdot)$ belongs to $M\mathcal{P}_0$.

\qquad

In the paper \cite{Uribe}, D. Cruz-Uribe and D. Wang give a variety of distinct
approaches, based on differing definitions, all lead to the same notion of the
variable Hardy space $H^{p(.)}.$

We recall the definition and the atomic decomposition of the Hardy spaces with variable exponents.

Topologize $\mathcal{S}(\mathbb{R}^{n})$ by the collection of semi-norms $\| \cdot \|_{\alpha, \beta}$, with $\alpha$ and $\beta$ multi-indices, given by
$$\| \varphi \|_{\alpha, \beta} = \sup_{x \in \mathbb{R}^{n}} |x^{\alpha} \partial^{\beta}\varphi(x)|.$$
For each $N \in \mathbb{N}$, we set $\mathcal{S}_{N}=\left\{  \varphi\in \mathcal{S}(\mathbb{R}^{n}): \| \varphi \|_{\alpha, \beta} \leq 1, |\alpha|, |\beta| \leq N \right\}$. Let $f \in \mathcal{S}'(\mathbb{R}^{n})$, we denote by $\mathcal{M}_{N}$ the grand maximal
operator given by
\[
\mathcal{M}_{N}f(x)=\sup\limits_{t>0}\sup\limits_{\varphi\in\mathcal{S}_{N}
}\left\vert \left(  t^{-n}\varphi(t^{-1} \cdot)\ast f\right)  \left(  x\right)
\right\vert.
\]

\begin{definition} Let $p(\cdot) \in M\mathcal{P}_0$. For $N > \frac{n}{p_0} + n + 1$, define
the Hardy space with variable exponents $H^{p(\cdot)}(\mathbb{R}^{n})$ as the set of all $f \in S^{\prime}
(\mathbb{R}^{n})$ for which $\mathcal{M}_{N}f \in L^{p(\cdot)}(\mathbb{R}^{n}).$ In this case we write $\left\Vert f\right\Vert _{H^{p(\cdot)}
}=\left\Vert \mathcal{M}_{N}f\right\Vert _{p(\cdot)}$.
\end{definition}

Let $\phi \in \mathcal{S}(\mathbb{R}^{n})$ be a function such that $\int \phi(x) dx \neq 0$. For $f \in \mathcal{S}'(\mathbb{R}^{n})$, we define the maximal function $M_{\phi}f$ by
$$M_{\phi}f(x)= \sup_{t>0} \left\vert \left(  t^{-n}\phi(t^{-1} \, \cdot)\ast f\right)  \left(  x\right)\right\vert.$$
Theorem 3.1 in \cite{Uribe} asserts that the quantities $\| M_{\phi}f \|_{p(\cdot)}$ and $\left\Vert \mathcal{M}_{N}f\right\Vert _{p(\cdot)}$ are comparable, with bounds that depend only on $p(\cdot)$ and $n$ and not on $f$, if $N > \frac{n}{p_0} + n + 1$.

Now, we give the definition of atoms.

\begin{definition}
Given $p(\cdot) \in M\mathcal{P}_0$, and $1 < q \leq \infty$, a function $a(\cdot)$ is a $(p(\cdot), q)$-atom if there exists a ball $B=B(x_0, r)$ such that\newline
$a_{1})$ $\textit{supp}\left( a\right) \subset B,$\newline
$a_{2})$ $\left\Vert a\right\Vert _{q}\leq \left\vert B\right\vert
^{\frac{1}{q}}\left\Vert \chi _{B}\right\Vert _{p(\cdot)}^{-1},$\newline
$a_{3})$ $\int a(x)x^{\alpha }dx=0$ for all $\left\vert \alpha \right\vert
\leq \lfloor n (\frac{1}{p_0}-1) \rfloor.$
\end{definition}

\begin{remark}
Let $a(\cdot)$ be a $(p(\cdot), q)$-atom and $1< s < q$, then H\"older's inequality implies $\| a \|_{s} \leq \frac{|B|^{1/s}}{\| \chi_B \|_{p(\cdot)}}$.
\end{remark}

Given $1 < q < \infty$, let $H^{p(\cdot),q}_{fin}(\mathbb{R}^{n})$ be the subspace of $H^{p(\cdot)}(\mathbb{R}^{n})$ consisting of all $f$ that have decompositions as finite sums of $(p(\cdot), q)$-atoms. By Theorem 7.1 in \cite{Uribe}, if $q$ is sufficiently large, $H^{p(\cdot),q}_{fin}(\mathbb{R}^{n})$ is dense in $H^{p(\cdot)}(\mathbb{R}^{n})$.

For $f \in H^{p(\cdot),q}_{fin}(\mathbb{R}^{n})$, define
\[
\| f \|_{H^{p(\cdot),q}_{fin}} = \inf \left\{ \left\| \sum_{j=1}^{k} \lambda_j \frac{\chi_{B_j}}{\| \chi_{B_j}\|_{p(\cdot)}} \right\|_{p(\cdot)} : f = \sum_{j=1}^{k} \lambda_j a_j \right\},
\]
where the infimum is taken over all finite decompositions of $f$ using $(p(\cdot), q)$-atoms.
Theorem 7.8 in \cite{Uribe} asserts that $\| f \|_{H^{p(\cdot),q}_{fin}} \simeq \| f \|_{H^{p(\cdot)}}$ for all $f \in H^{p(\cdot),q}_{fin}(\mathbb{R}^{n})$.

\qquad

A weight is a non-negative locally integrable function on $\mathbb{R}^{n}$ that takes values in $(0, \infty)$ almost everywhere, i.e. : the weights are allowed
to be zero or infinity only on a set of Lebesgue measure zero.

Given a weight $w$ and a measurable set $E$, we use the notation $w(E) = \int_{E} w(x) dx$. Let $f$ be a locally integrable function on $\mathbb{R}^{n}$. The function
$$\widetilde{M}(f)(x) = \sup_{\delta > 0} \frac{1}{|B(x, \delta)|} \int_{B(x, \delta)} |f(y)| dy,$$
is called the centered Hardy-Littlewood maximal function of $f$.
It is easy to check that
\begin{equation}
2^{-n}M(f)(x) \leq \widetilde{M}(f)(x) \leq M(f)(x), \,\,\,\,\, for \,\, all \,\, x \in \mathbb{R}^{n}. \label{ineqmax}
\end{equation}
We say that a weight $w \in \mathcal{A}_1$ if there exists $C >  0$ such that
\begin{equation}
M(w)(x) \leq C w(x), \,\,\,\,\, a.e. \, x \in \mathbb{R}^{n}, \label{A1cond}
\end{equation}
the best possible constant is denoted by $[w]_{\mathcal{A}_1}$. Equivalently, a weight $w \in \mathcal{A}_1$ if there exists $C >  0$ such that for every ball $B$
\begin{equation}
\frac{1}{|B|} \int_{B} w(x) dx \leq C \, ess\inf_{x \in B} w(x). \label{A1condequiv}
\end{equation}
\begin{remark}\footnote{$\mathcal{O}(n)=\{ A \in GL_n(\mathbb{R}) : A^{t}= A^{-1} \}$}
The orthogonal group $\mathcal{O}(n)$ induces an action on functions by $f_{A}(x) = f(A^{-1}x)$, where $A \in \mathcal{O}(n)$.
Since $\widetilde{M}(w_{A})(x) = [\widetilde{M}(w)]_{A}(x)$ for all $x \in \mathbb{R}^{n}$, and taking account (\ref{ineqmax}) and (\ref{A1cond}), it follows that $w \in \mathcal{A}_1$ if and only if $w_{A} \in \mathcal{A}_1$ for all $A \in \mathcal{O}(n)$. Therefore, the space of weights $\mathcal{A}_1$ is preserved by the action of $\mathcal{O}(n)$.
\end{remark}
\begin{remark} If $w \in \mathcal{A}_1$ and $0 < r < 1$, then by H\"older inequality we have that $w^{r} \in \mathcal{A}_1$.
\end{remark}
For $1 < p < \infty$, we say that a weight $w \in \mathcal{A}_p$ if there exists $C> 0$ such that for every ball $B$
$$\left( \frac{1}{|B|} \int_{B} w(x) dx \right) \left( \frac{1}{|B|} \int_{B} [w(x)]^{-\frac{1}{p-1}} dx \right)^{p-1} \leq C.$$
It is well known that $\mathcal{A}_{p_1} \subset \mathcal{A}_{p_2}$ for all $1 \leq p_1 < p_2 < \infty$.

Given $1 < p \leq q < \infty$, we say that a weight $w \in \mathcal{A}_{p,q}$ if there exists $C> 0$ such that for every ball $B$
$$\left( \frac{1}{|B|} \int_{B} [w(x)]^{q} dx \right)^{1/q} \left( \frac{1}{|B|} \int_{B} [w(x)]^{p'} dx \right)^{1/p'} \leq C < \infty.$$
For $p=1$, we say that a weight $w \in \mathcal{A}_{1,q}$ if there exists $C> 0$ such that for every ball $B$
$$\left( \frac{1}{|B|} \int_{B} [w(x)]^{q} dx \right)^{1/q} \leq C \, ess\inf_{x \in B} w(x).$$
When $p=q$, this definition is equivalent to $w^{p} \in \mathcal{A}_{p}$.
\begin{remark}
From the inequality in (\ref{A1condequiv}) it follows that if a weight $w \in \mathcal{A}_1$, then $0 < ess\inf_{x \in B} w(x) < \infty$ for each ball $B$. Thus $w \in \mathcal{A}_1$ implies that $w^{\frac{1}{q}} \in \mathcal{A}_{p,q}$, for each $1 \leq p \leq q < \infty$.
\end{remark}

A weight satisfies the reverse H\"older inequality with exponent $s > 1$, denoted by $w \in RH_{s}$, if there exists $C> 0$ such that for every ball $B$,
$$\left(\frac{1}{|B|} \int_{B} [w(x)]^{s} dx \right)^{\frac{1}{s}} \leq C \frac{1}{|B|} \int_{B} w(x) dx;$$
the best possible constant is denoted by $[w]_{RH_s}$. We observe that if $w \in RH_s$, then by H\"older's inequality, $w \in RH_t$ for all $1 < t < s$, and
$[w]_{RH_t} \leq [w]_{RH_s}$.

\begin{lemma} Given $w \in \mathcal{A}_1$, then $w \in RH_s$, where $s= 1 + (2^{n+1} [w]_{\mathcal{A}_1})^{-1}$.
\end{lemma}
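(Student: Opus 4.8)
The plan is to prove the reverse H\"older inequality by the classical Calder\'on--Zygmund stopping-time argument on dyadic subcubes, keeping careful track of the constants so that the exponent comes out to be exactly $s=1+(2^{n+1}[w]_{\mathcal{A}_1})^{-1}$. Put $A=[w]_{\mathcal{A}_1}$. Since the ball formulation of $RH_s$ is equivalent, up to dimensional constants, to the cube formulation, and likewise for $\mathcal{A}_1$, it suffices to work with cubes. I would first reduce to the case of a bounded weight by truncating: set $w_N=\min(w,N)$. From (\ref{A1condequiv}) one checks at once that $w_N\in\mathcal{A}_1$ with $[w_N]_{\mathcal{A}_1}\le A$ (on a ball $B$: if $\operatorname{ess\,inf}_B w\le N$ the infima of $w$ and $w_N$ over $B$ agree, and if $\operatorname{ess\,inf}_B w>N$ then $w_N\equiv N$ on $B$). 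Because $w_N$ is bounded, $\int_Q w_N^{\,s}<\infty$ automatically, which is what will make the absorption step below legitimate; once the desired estimate holds for every $w_N$ with a constant independent of $N$, the case of $w$ follows by monotone convergence, using $\langle w_N\rangle_Q\le\langle w\rangle_Q$.

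Fix a cube $Q$ and write $\alpha=\frac{1}{|Q|}\int_Q w_N$. Starting from the layer-cake identity $\int_Q w_N^{\,s}=(s-1)\int_0^\infty t^{s-2}\big(\int_{\{x\in Q:\,w_N(x)>t\}}w_N\big)\,dt$, I would split the $t$-integral at $\alpha$. On $0<t\le\alpha$ the inner integral is at most $\int_Q w_N=\alpha|Q|$, which contributes precisely the main term $\alpha^{s-1}\int_Q w_N$. On $t>\alpha$ I would run the Calder\'on--Zygmund decomposition of $w_N$ at height $t$ over the dyadic subcubes of $Q$: since $\langle w_N\rangle_Q=\alpha<t$, the cube $Q$ itself is not selected, so one obtains pairwise disjoint maximal dyadic cubes $Q_j\subsetneq Q$ with $t<\langle w_N\rangle_{Q_j}\le 2^n t$ (the upper bound because the dyadic parent of $Q_j$ has average at most $t$) and with $\{x\in Q:w_N(x)>t\}\subseteq\bigcup_j Q_j$ up to a null set (Lebesgue differentiation). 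Hence the inner integral is $\le\sum_j\int_{Q_j}w_N\le 2^n t\sum_j|Q_j|$. At this point the $\mathcal{A}_1$ hypothesis enters: $t<\langle w_N\rangle_{Q_j}\le A\operatorname{ess\,inf}_{Q_j}w_N$ forces, up to null sets, $Q_j\subseteq\{w_N\ge t/A\}$, whence $\sum_j|Q_j|\le|\{x\in Q:w_N(x)\ge t/A\}|$.

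Feeding this into the $t>\alpha$ part and substituting $t\mapsto At$ turns it into $\theta\int_Q w_N^{\,s}$ with $\theta=\dfrac{2^n(s-1)A^{s}}{s}$, so altogether $\int_Q w_N^{\,s}\le\alpha^{s-1}\int_Q w_N+\theta\int_Q w_N^{\,s}$. Since $\int_Q w_N^{\,s}<\infty$, whenever $\theta<1$ this rearranges to $\frac{1}{|Q|}\int_Q w_N^{\,s}\le(1-\theta)^{-1}\alpha^{s}$, i.e.\ the reverse H\"older inequality for $w_N$ with constant $(1-\theta)^{-1/s}$. The choice $s=1+(2^{n+1}A)^{-1}$ is exactly what forces $\theta<1$: with this $s$ one has $2^n(s-1)=(2A)^{-1}$, hence $\theta=\dfrac{A^{s-1}}{2s}$, and the crude elementary bound $A^{s-1}=A^{1/(2^{n+1}A)}=e^{(\log A)/(2^{n+1}A)}\le e^{1/(4e)}<2$ (using $A\ge 1$) gives $\theta<1/s<1$, uniformly in $A$. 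Letting $N\to\infty$ (monotone convergence on the left, $\langle w_N\rangle_Q\le\langle w\rangle_Q$ on the right) yields the inequality for $w$ over all cubes, and comparing a ball with its inscribed and circumscribed cubes transfers it to balls, affecting only the constant and not the exponent $s$.

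The step I expect to be the real issue is bookkeeping rather than ideas: one has to (i) set up the truncation together with the uniform bound $[w_N]_{\mathcal{A}_1}\le A$ so that the absorption of $\int_Q w_N^{\,s}$ is rigorous, and (ii) keep the two constant-producing steps under control --- the dyadic-parent factor $2^n$ in $\langle w_N\rangle_{Q_j}\le 2^n t$ and the Jacobian from the substitution $t\mapsto At$ --- so that the exponent lands precisely on $1+(2^{n+1}[w]_{\mathcal{A}_1})^{-1}$. A last minor point is the harmless passage between the ball and cube versions of $\mathcal{A}_1$ and of $RH_s$.
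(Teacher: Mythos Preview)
Your argument is correct: the layer--cake identity, the Calder\'on--Zygmund selection with the dyadic-parent bound $\langle w_N\rangle_{Q_j}\le 2^n t$, the $\mathcal{A}_1$ step $Q_j\subset\{w_N\ge t/A\}$, and the absorption with $\theta=2^n(s-1)A^s/s=A^{s-1}/(2s)<1$ all check out, and the truncation $w_N=\min(w,N)$ is handled properly.

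As for the comparison with the paper: the paper does \emph{not} actually prove this lemma. Its ``proof'' consists of citing the sharp reverse-H\"older estimate from Lerner--Ombrosi--P\'erez (stated there for cubes) and then remarking that, since $\mathcal{A}_1$ weights are doubling, the same exponent works for balls. What you have written is, in effect, the argument behind that citation: the classical stopping-time proof with the constants tracked carefully. So your proposal is not a different route but rather the route the paper outsources; you supply the details the paper omits.

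One small caveat worth flagging explicitly. The paper defines $[w]_{\mathcal{A}_1}$ via the uncentered maximal function over \emph{balls}, whereas your Calder\'on--Zygmund step uses the \emph{cube} form of the $\mathcal{A}_1$ condition. You note this passage as ``harmless'', and for the purpose of membership in $RH_s$ it is; but if one insists on the exact numerical exponent $s=1+(2^{n+1}[w]_{\mathcal{A}_1})^{-1}$ with the ball constant, a dimensional factor may creep in when converting the ball $\mathcal{A}_1$ constant to the cube one. The paper glosses over this same point (its appeal to doubling addresses the $RH_s$ side, not the $\mathcal{A}_1$ constant), so you are no worse off than the source; just be aware that the ``same exponent'' claim is slightly informal on both ends.
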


This result was proved for cubes in \cite{lerner}. However, since $w \in \mathcal{A}_1$ is doubling, the Lemma holds for balls with same exponent.

Given $0<\alpha <n,$ we define the fractional maximal operator $M_{\alpha }$
by
\[
M_{\alpha }f(x)=\sup\limits_{B}\frac{1}{\left\vert B\right\vert ^{1-\frac{
\alpha }{n}}}\int\limits_{B}\left\vert f(y)\right\vert dy,
\]
where $f$ is a locally integrable function and the supremum is taken over
all the balls $B$ which contain $x.$ In the case $\alpha =0,$ the fractional
maximal operator reduces to the Hardy-Littlewood maximal operator.

The fractional maximal operator satisfies the following weighted vector-valued inequality.

\begin{lemma} Given $0< \alpha < n$, let $p$ and $q$ such that $1 < p \leq q < \infty$ and $\frac{1}{q} = \frac{1}{p} - \frac{\alpha}{n}$. Then for all $w \in \mathcal{A}_1$ and all $1 < \theta < \infty$, we have
\[
\left\| \left\{\sum_{j=1}^{\infty} [M_{\alpha}(f_j)]^{\theta} \right\}^{\frac{1}{\theta}} \right\|_{L^{q}(w)} \leq C \left\| \left\{\sum_{j=1}^{\infty} |f_j|^{\theta} \right\}^{\frac{1}{\theta}} \right\|_{L^{p}(w^{p/q})},
\]
for all sequences of functions $\{ f_j\}_{j=1}^{\infty} \subset L^{p}(w^{p/q})$.
\end{lemma}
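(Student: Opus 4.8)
The plan is to reduce the vector--valued inequality to the (scalar) two--weight bound for $M_{\alpha}$ and then upgrade it to the $\ell^{\theta}$ setting by an off--diagonal extrapolation argument of Rubio de Francia type. First I would record the scalar estimate: by the Muckenhoupt--Wheeden theorem, if $1<p\le q<\infty$, $\frac1q=\frac1p-\frac{\alpha}{n}$ and $v\in\mathcal{A}_{p,q}$, then $M_{\alpha}\colon L^{p}(v^{p})\to L^{q}(v^{q})$ is bounded, with operator norm controlled by $[v]_{\mathcal{A}_{p,q}}$. For $w\in\mathcal{A}_1$, the Remark above giving $w^{1/q}\in\mathcal{A}_{p,q}$ lets us take $v=w^{1/q}$ (hence $v^{q}=w$ and $v^{p}=w^{p/q}$), so that
\[
\|M_{\alpha}f\|_{L^{q}(w)}\le C\,\|f\|_{L^{p}(w^{p/q})},\qquad w\in\mathcal{A}_1 .
\]

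The diagonal case $\theta=q$ is then immediate: since the mixed norm $L^{q}(\ell^{q})$ has equal inner and outer exponents and $q\ge p$, the displayed scalar estimate applied termwise together with Minkowski's integral inequality ($\ell^{q}(L^{p})\hookrightarrow L^{p}(\ell^{q})$) gives
\[
\left\|\Bigl(\sum_{j}(M_{\alpha}f_{j})^{q}\Bigr)^{1/q}\right\|_{L^{q}(w)}
=\Bigl(\sum_{j}\|M_{\alpha}f_{j}\|_{L^{q}(w)}^{q}\Bigr)^{1/q}
\le C\Bigl(\sum_{j}\|f_{j}\|_{L^{p}(w^{p/q})}^{q}\Bigr)^{1/q}
\le C\left\|\Bigl(\sum_{j}|f_{j}|^{q}\Bigr)^{1/q}\right\|_{L^{p}(w^{p/q})}.
\]

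For arbitrary $1<\theta<\infty$ I would invoke off--diagonal vector--valued extrapolation (Cruz-Uribe--Martell--P\'erez): since the scalar pair $(M_{\alpha}f,f)$ satisfies the two--weight bound above for \emph{every} $v\in\mathcal{A}_{p,q}$, it automatically satisfies, for every $1<\theta<\infty$ and every $v\in\mathcal{A}_{p,q}$,
\[
\left\|\Bigl(\sum_{j}(M_{\alpha}f_{j})^{\theta}\Bigr)^{1/\theta}\right\|_{L^{q}(v^{q})}
\le C\left\|\Bigl(\sum_{j}|f_{j}|^{\theta}\Bigr)^{1/\theta}\right\|_{L^{p}(v^{p})},
\]
and specializing $v=w^{1/q}$ with $w\in\mathcal{A}_1$ yields the lemma. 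If a self--contained proof is wanted, the engine (say for $\theta<q$) is: dualize $\|\sum_{j}(M_{\alpha}f_{j})^{\theta}\|_{L^{q/\theta}(w)}$ against some $0\le h$ with $\|h\|_{L^{(q/\theta)'}(w)}=1$; since $w\in\mathcal{A}_1\subset\mathcal{A}_{(q/\theta)'}$ the maximal operator is bounded on $L^{(q/\theta)'}(w)$, so the Rubio de Francia algorithm produces $\mathcal{R}h\ge h$ with $\|\mathcal{R}h\|_{L^{(q/\theta)'}(w)}\le2$ and $\mathcal{R}h\in\mathcal{A}_1$; one then estimates each $\int(M_{\alpha}f_{j})^{\theta}\,(\mathcal{R}h)\,w$ by a scalar fractional--maximal two--weight bound, invoking the reverse H\"older self--improvement of $\mathcal{A}_1$ weights (our Lemma on $RH_{s}$, which gives $w^{s}\in\mathcal{A}_1$ and $(\mathcal{R}h)^{s}\in\mathcal{A}_1$ for some $s>1$) to reconcile the exponents, and finally sums in $j$ by H\"older's and Minkowski's inequalities together with $\|\mathcal{R}h\|_{L^{(q/\theta)'}(w)}\le2$. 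The range $\theta>q$ is treated by the dual form of the same scheme.

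I expect Step~3 to be the main obstacle: the delicate point is the exponent bookkeeping, namely matching the power $\theta$ sitting on $M_{\alpha}f_{j}$ with a scalar estimate whose natural exponents $(p,q)$ carry the gap $\frac1p-\frac1q=\frac{\alpha}{n}$ --- this mismatch (for $\theta\neq q$) is exactly what forces the Rubio de Francia iterate and the reverse H\"older inequality into the argument, and one must verify throughout that all the $\mathcal{A}_1$, $\mathcal{A}_{p,q}$ and $RH_{s}$ constants, hence the final $C$, depend only on $[w]_{\mathcal{A}_1},p,q,\theta,n$ and not on the sequence $\{f_{j}\}$. By contrast, the scalar reduction and the diagonal case $\theta=q$ are routine.
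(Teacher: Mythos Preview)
Your proposal is correct and follows essentially the same route as the paper: the paper's proof consists of exactly two lines --- observe that $w\in\mathcal{A}_1$ implies $w^{1/q}\in\mathcal{A}_{p,q}$ (this is Remark~11), and then invoke Theorem~3.23 in Cruz-Uribe--Martell--P\'erez (\cite{uribe2}), which is precisely the off-diagonal vector-valued extrapolation result you cite. Your diagonal case $\theta=q$ and the self-contained Rubio de Francia sketch are additional detail the paper does not supply; they are not needed here, since the paper is content to cite the extrapolation theorem as a black box.
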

\begin{proof} Given $w \in \mathcal{A}_1$, from Remark 2, we have that $w^{1/q} \in \mathcal{A}_{p,q}$. So, Lemma follows from Theorem 3.23 in \cite{uribe2}.
\end{proof}

We conclude these preliminaries with the definition of the weighted Hardy spaces.
Given a weight $w \in \mathcal{A}_1$ and $p_0 > 0$, the weighted Hardy space $H^{p_0}(w)$ consists of all tempered distributions $f$ such that
\[
\| f \|_{H^{p_0}(w)} = \| M_{\phi}f \|_{L^{p_0}(w)} = \left( \int_{\mathbb{R}^{n}}  [M_{\phi}f(x)]^{p_0} w(x) dx \right)^{1/p_0} < \infty.
\]
For $N$ sufficiently large, we have  $\| M_{\phi}f \|_{L^{p_0}(w)} \simeq \| \mathcal{M}_{N}f \|_{L^{p_0}(w)}$, (see \cite{tor}).

Let $p(\cdot) \in M\mathcal{P}_0$, and $q>1$. Given $w \in \mathcal{A}_1$, define $H^{p_0, q}_{fin}(w)$ as the set of all finite sums of $(p(\cdot), q)$-atoms. If $q$ sufficiently large, then by Lemma 7.6 in \cite{Uribe}, $H^{p_0, q}_{fin}(w) \subset H^{p_0}(w)$. Moreover, by Lemma 7.3 in \cite{Uribe}, we have that $H^{p(\cdot), q}_{fin}(\mathbb{R}^{n}) = H^{p_0, q}_{fin}(w)$ as sets. (It introduce this notation involving $w$ to stress that it is a subset of $H^{p_0}(w)$).

For $f \in H^{p_0, q}_{fin}(w)$, define
\[
\| f \|_{H^{p_0,q}_{fin}(w)} = \inf \left\{ \left\| \sum_{j=1}^{k} \lambda_j^{p_0} \frac{\chi_{B_j}}{\| \chi_{B_j}\|_{p(\cdot)}^{p_0}} \right\|_{L^{1}(w)}^{1/p_0} : f = \sum_{j=1}^{k} \lambda_j a_j \right\},
\]
where the infimum is taken over all finite decompositions of $f$ using $(p(\cdot), q)$-atoms. If $w \in \mathcal{A}_1 \cap L^{\left(\frac{p(\cdot)}{p_0}\right)'}$, then Lemma 7.11 in \cite{Uribe} asserts that $\| f \|_{H^{p_0, q}_{fin}(w)} \leq C \| f \|_{H^{p_0}(w)}$ for all $f \in H^{p_0, q}_{fin}(w)$.

\section{Auxiliary Results}

The following lemmas are crucial to get the main results.

\begin{lemma} For $0 \leq \alpha < n$ and $m >1$, let $T_{\alpha, m}$ be the operator defined by (\ref{T}). If $w \in \mathcal{A}_1$, then
\begin{equation}
w\left( \{ x : |T_{\alpha, m}f (x) | \geq \lambda \} \right) \leq C \lambda^{-\frac{n}{n - \alpha}} \sum_{i=1}^{m} \left(\int_{\mathbb{R}^{n}} |f(x)| [w_{A_i^{-1}}(x)]^{\frac{n- \alpha}{n}} dx \right)^{\frac{n}{n - \alpha}}. \label{tipodebil}
\end{equation}
for all integrable function $f$ with support compact.
\end{lemma}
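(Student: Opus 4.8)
The plan is to reduce the estimate to the classical weighted weak-type inequality for the Riesz potential. Everything hinges on a pointwise domination of the kernel of $T_{\alpha,m}$. Since $\alpha_1+\cdots+\alpha_m=n-\alpha$ with every $\alpha_i\geq 0$, the weighted arithmetic--geometric mean inequality yields, for all $x,y\in\mathbb{R}^{n}$,
\[
\prod_{i=1}^{m}|x-A_iy|^{-\alpha_i}
=\prod_{i=1}^{m}\bigl(|x-A_iy|^{-(n-\alpha)}\bigr)^{\alpha_i/(n-\alpha)}
\leq \sum_{i=1}^{m}\frac{\alpha_i}{n-\alpha}\,|x-A_iy|^{-(n-\alpha)} .
\]
Because each $A_i$ is orthogonal, $|x-A_iy|=|A_i^{-1}x-y|$, so for $0<\alpha<n$ this gives the pointwise bound
\[
|T_{\alpha,m}f(x)|\leq C\sum_{i=1}^{m}\int_{\mathbb{R}^{n}}\frac{|f(y)|}{|A_i^{-1}x-y|^{\,n-\alpha}}\,dy
= C\sum_{i=1}^{m}(I_{\alpha}|f|)(A_i^{-1}x),
\]
where $I_{\alpha}$ is the Riesz potential (its normalizing constant is absorbed into $C$); the hypotheses on $f$ make every integral finite and legitimize the manipulations below.

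Next I set $q=\tfrac{n}{n-\alpha}$, so that $\tfrac1q=\tfrac1p-\tfrac{\alpha}{n}$ with $p=1$. From the pointwise bound, $|T_{\alpha,m}f(x)|\geq\lambda$ forces $(I_{\alpha}|f|)(A_i^{-1}x)\geq c\lambda/m$ for some $i$; hence, using $\{x:(I_{\alpha}|f|)(A_i^{-1}x)>s\}=A_i\{z:(I_{\alpha}|f|)(z)>s\}$, the change of variables $z=A_i^{-1}x$, and the identity $w(A_iE)=w_{A_i^{-1}}(E)$,
\[
w\bigl(\{|T_{\alpha,m}f|\geq\lambda\}\bigr)\leq\sum_{i=1}^{m}w_{A_i^{-1}}\bigl(\{I_{\alpha}|f|>c\lambda/m\}\bigr).
\]
By the Remark that $\mathcal{A}_1$ is preserved by the action of $\mathcal{O}(n)$, each $w_{A_i^{-1}}$ again belongs to $\mathcal{A}_1$, and therefore, by Remark 2 (with $p=1$), $[w_{A_i^{-1}}]^{1/q}\in\mathcal{A}_{1,q}$. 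The classical Muckenhoupt--Wheeden weak-type estimate for $I_{\alpha}$ then gives
\[
w_{A_i^{-1}}\bigl(\{I_{\alpha}|f|>s\}\bigr)\leq C\,s^{-q}\Bigl(\int_{\mathbb{R}^{n}}|f(y)|\,[w_{A_i^{-1}}(y)]^{1/q}\,dy\Bigr)^{q},
\]
and since $\tfrac1q=\tfrac{n-\alpha}{n}$, choosing $s=c\lambda/m$ and summing over $i$ produces exactly (\ref{tipodebil}) when $0<\alpha<n$.

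The endpoint $\alpha=0$ is the main obstacle: there $n-\alpha=n$, the arithmetic--geometric mean bound only delivers the non--locally--integrable kernels $|A_i^{-1}x-y|^{-n}$, and the reduction to $I_{\alpha}$ breaks down (in fact $T_{0,m}$ is merely of weak type, not bounded on $L^1$). For this case I would instead use the invertibility of the differences $A_i-A_j$ quantitatively: since $|A_i^{-1}x-A_j^{-1}x|=|(I-A_iA_j^{-1})x|\geq c|x|$ for $i\neq j$, the $m$ singularities $A_i^{-1}x$ are mutually separated at scale $|x|$, so on the ball $B(A_i^{-1}x,c|x|/4)$ the factors with index $j\neq i$ are $\lesssim|x|^{-(n-\alpha_i)}$, while on the complement of $\bigcup_iB(A_i^{-1}x,c|x|/4)$ all factors are comparable and the whole kernel is controlled by $(|x|+|y|)^{-n}$. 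On the $i$-th ball one is then left, after the orthogonal change of variables, with the milder fractional kernel $|A_i^{-1}x-y|^{-\alpha_i}$ with $0<\alpha_i<n$, weighted by $|x|^{-(n-\alpha_i)}$, and on the complementary region with a Hardy-type averaging integral; combining these with the $\mathcal{A}_1$ property of the $w_{A_i^{-1}}$ yields the weak-$(1,1)$ bound. Alternatively, and perhaps more economically, the case $\alpha=0$ can be quoted from the known weighted weak-$(1,1)$ boundedness of operators with kernels $\prod_i|x-A_iy|^{-\alpha_i}$, $\sum_i\alpha_i=n$.
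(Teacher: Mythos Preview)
Your argument is correct and matches the paper's own proof: for $0<\alpha<n$ both reduce via the pointwise bound $|T_{\alpha,m}f(x)|\leq C\sum_i (I_\alpha|f|)(A_i^{-1}x)$ to the Muckenhoupt--Wheeden weak-type inequality for $I_\alpha$, after observing that $w\in\mathcal{A}_1$ implies $[w_{A_i^{-1}}]^{(n-\alpha)/n}\in\mathcal{A}_{1,\,n/(n-\alpha)}$ (this is Remark~11 in the paper, not Remark~2). For $\alpha=0$ the paper likewise does not argue directly but simply cites the weighted weak-$(1,1)$ result of Riveros--Urciuolo, exactly the alternative you propose at the end; your preceding sketch is plausible but the paper gives no such direct argument.
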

\begin{proof}
We study separately the cases $0 < \alpha < n$ and $\alpha = 0$. For $0 < \alpha < n$, we have that $|T_{\alpha, m} f (x)| \leq \sum_{i=1}^{m} \left( I_{\alpha}|f| \right)_{A_i}(x)$, so
$$\{ x : |T_{\alpha, m}f (x) | \geq \lambda \} \subset \bigcup_{i=1}^{m} A_i \left( \left\{ x : \left( I_{\alpha}|f| \right)(x) \geq \lambda / m \right\} \right).$$
Since $w \in \mathcal{A}_1$, from Remark 9 and Remark 11, it follows that $[w_{A_i^{-1}}]^{\frac{n - \alpha}{n}} \in \mathcal{A}_{1, \frac{n}{n - \alpha}}$ for each $i=1, 2, ..., m$. Now Theorem 5 in \cite{muck} gives (\ref{tipodebil}). \\
The proof for $\alpha= 0$ is analogous to the proof of Theorem 1 b) in \cite{riveros}.
\end{proof}

\begin{lemma}
For $0 \leq \alpha < n$ and $m >1$, let $T_{\alpha, m}$ be the operator defined by (\ref{T}) and let $a(\cdot)$ be a $(p(.), q/p_0)$-atom supported on a ball $B$.\\
\textbf{a$)$} If $0 < \alpha < n$, $w \in \mathcal{A}_1$ and $q > \frac{n p_0}{\alpha}$, then for $\frac{1}{q_0} = \frac{1}{p_0} - \frac{\alpha}{n}$
\[
\int_{\mathbb{R}^{n}} |T_{\alpha, m} a (x)|^{q_0} w(x) dx \leq C |B|^{\frac{\alpha}{n}q_0} \| \chi_{B} \|_{p(.)}^{-q_0} \sum_{i=1}^{m} w_{A_{i}^{-1}}(B),
\]
\textbf{b$)$} If $\alpha=0$ and $w \in \mathcal{A}_1 \cap RH_{\left( \frac{q}{p_0}\right)'}$, then
\[
\int_{\mathbb{R}^{n}} |T_{0, m} a (x)|^{p_0} w(x) dx \leq C \| \chi_{B} \|_{p(.)}^{-p_0} \sum_{i=1}^{m} w_{A_{i}^{-1}}(B),
\]
\end{lemma}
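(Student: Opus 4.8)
The plan is to fix a $(p(\cdot),q/p_0)$-atom $a$ supported on a ball $B=B(x_0,R)$, write $d:=\lfloor n(\frac{1}{p_0}-1)\rfloor$ for the order of its vanishing moments, set $E:=\bigcup_{i=1}^{m}A_i B(x_0,2R)$, and estimate the integral separately over $E$ and over $E^{c}$. I will use repeatedly that, by orthogonality of the $A_j$, the kernel of $T_{\alpha,m}$ equals $K(x,y)=\prod_{j=1}^{m}|A_j^{-1}x-y|^{-\alpha_j}$ with $\sum_j\alpha_j=n-\alpha$, and that bounding every factor by the one with the smallest base gives inequalities of the type $\prod_j|A_j^{-1}x-y|^{-\alpha_j}\le\sum_l|A_l^{-1}x-y|^{-(n-\alpha)}$; in particular $|T_{\alpha,m}a(x)|\le\sum_{l=1}^{m}(I_\alpha|a|)(A_l^{-1}x)$ when $\alpha>0$. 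I will also use that $w\in\mathcal A_1$ implies $w_{A_l^{-1}}\in\mathcal A_1$ for each $l$ (Remark 9), so each $w_{A_l^{-1}}$ is doubling and obeys the dilation bound $w_{A_l^{-1}}(2^{k}B)\le C2^{kn}w_{A_l^{-1}}(B)$; and that $\|a\|_1\le|B|\,\|\chi_B\|_{p(\cdot)}^{-1}$ (H\"older plus the size condition) and $\|a\|_{q/p_0}\le|B|^{p_0/q}\|\chi_B\|_{p(\cdot)}^{-1}$ (the size condition on the atom).

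For the region $E^{c}$ the argument is the same in both parts. If $x\in E^{c}$ then $|A_j^{-1}x-x_0|\ge 2R$ for every $j$, so for $y\in B$ one has $|A_j^{-1}x-y|\simeq|A_j^{-1}x-x_0|$ and $y\mapsto K(x,y)$ is smooth on $B$. By Leibniz' rule, the elementary bound $|\partial_y^{\beta}(|c-y|^{-\mu})|\le C|c-y|^{-\mu-|\beta|}$, and the "smallest base" inequality, one gets $|\partial_y^{\gamma}K(x,y)|\le C\sum_{l}|A_l^{-1}x-x_0|^{-(n-\alpha+|\gamma|)}$ for $y\in B$. Subtracting from $K(x,\cdot)$ its degree-$d$ Taylor polynomial at $x_0$ and using $\int a(y)(y-x_0)^{\gamma}\,dy=0$ for $|\gamma|\le d$ gives
\[
|T_{\alpha,m}a(x)|\le C R^{d+1}\|a\|_1\sum_{l=1}^{m}|A_l^{-1}x-x_0|^{-(n-\alpha+d+1)},\qquad x\in E^{c}.
\]
Raising this to the power $q_0$ (resp. $p_0$ when $\alpha=0$), summing over $l$, changing variables $y=A_l^{-1}x$ (which turns $w(x)$ into $w_{A_l^{-1}}(y)$ and $E^{c}$ into a subset of $\{|y-x_0|\ge 2R\}$), decomposing that set into dyadic annuli, and using the dilation bound for $w_{A_l^{-1}}$, one is left with a geometric series that converges precisely because $d+1>n(\frac{1}{p_0}-1)$. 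Collecting the powers of $R$ and inserting $\|a\|_1\le|B|\,\|\chi_B\|_{p(\cdot)}^{-1}$ produces the stated bound on $E^{c}$.

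On $E$ in case \textbf{a)} ($0<\alpha<n$) I use $|T_{\alpha,m}a(x)|\le\sum_l(I_\alpha|a|)(A_l^{-1}x)$ together with the uniform estimate $\|I_\alpha|a|\|_{\infty}\le C|B|^{\alpha/n}\|\chi_B\|_{p(\cdot)}^{-1}$. To prove the latter, split $I_\alpha|a|(y)$ according to whether $y\in 2B$ or not: on $2B$ apply H\"older with exponents $q/p_0$ and $(q/p_0)'$ — the integrability of $|y-z|^{(\alpha-n)(q/p_0)'}$ over $B$ is exactly where the hypothesis $q>np_0/\alpha$ enters — together with the size condition; off $2B$ use $|y-z|\simeq|y-x_0|\ge R$ and $\|a\|_1\le|B|\,\|\chi_B\|_{p(\cdot)}^{-1}$; in both ranges the powers of $R$ combine to $R^{\alpha}\simeq|B|^{\alpha/n}$. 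Consequently $\int_E|T_{\alpha,m}a|^{q_0}w\le C|B|^{\alpha q_0/n}\|\chi_B\|_{p(\cdot)}^{-q_0}w(E)$, and $w(E)\le\sum_i w(A_iB(x_0,2R))=\sum_i w_{A_i^{-1}}(B(x_0,2R))\le C\sum_i w_{A_i^{-1}}(B)$ by doubling, which finishes part a).

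On $E$ in case \textbf{b)} ($\alpha=0$) there is no $L^{\infty}$ bound; instead I use that $T_{0,m}$ is bounded on $L^{q/p_0}(\mathbb R^{n})$ (being of Calder\'on--Zygmund type; see \cite{riveros},\cite{rocha1}) and apply H\"older on $E$ with exponents $\frac{q/p_0}{p_0}$ and its conjugate:
\[
\int_E|T_{0,m}a(x)|^{p_0}w(x)\,dx\le\left(\int_E|T_{0,m}a(x)|^{q/p_0}dx\right)^{p_0^{2}/q}\left(\int_E w(x)^{(q/p_0^{2})'}dx\right)^{1/(q/p_0^{2})'}.
\]
The first factor is $\le(C\|a\|_{q/p_0})^{p_0}\le C|B|^{p_0^{2}/q}\|\chi_B\|_{p(\cdot)}^{-p_0}$; for the second, since $(q/p_0^{2})'\le(q/p_0)'$ (because $p_0\le1$), the hypothesis $w\in RH_{(q/p_0)'}$ gives $w\in RH_{(q/p_0^{2})'}$, so reverse H\"older yields $\big(\int_E w^{(q/p_0^{2})'}\big)^{1/(q/p_0^{2})'}\le C|E|^{-p_0^{2}/q}w(E)\le C|B|^{-p_0^{2}/q}\sum_i w_{A_i^{-1}}(B)$ (using $|E|\simeq|B|$ and doubling). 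Multiplying gives the bound on $E$, and with the $E^{c}$ estimate this proves part b). (One may alternatively avoid the $L^{q/p_0}$-boundedness on $E$ by integrating the distribution function and invoking the weighted weak-type estimate of Lemma 15, then optimizing in $\lambda$ and applying H\"older with $RH_{(q/p_0)'}$.) The delicate points are the sharp use of $q>np_0/\alpha$ in the $L^{\infty}$ estimate of case a), and the control of the mixed $y$-derivatives of the non-convolution kernel $K$ on $E^{c}$ (via Leibniz and the "smallest base" trick) together with the convergence of the dyadic sum there, which holds with exactly the margin furnished by $d=\lfloor n(\frac{1}{p_0}-1)\rfloor$; in case b) the role of the hypothesis $w\in RH_{(q/p_0)'}$ is precisely to replace the missing $L^{\infty}$ bound when estimating the weight over $E$.
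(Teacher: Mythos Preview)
Your proof is correct and reaches the same conclusion through a more elementary route than the paper. Both arguments split $\mathbb{R}^{n}$ into the near set $E=\bigcup_{i}A_iB(x_0,2R)$ and the far set $E^{c}$, and on $E^{c}$ both start from the same Taylor--remainder pointwise bound $|T_{\alpha,m}a(x)|\le C R^{d+1}\|a\|_{1}\sum_{l}|A_{l}^{-1}x-x_0|^{-(n-\alpha+d+1)}$. From here you integrate directly over dyadic annuli using the $\mathcal A_1$ dilation bound $w_{A_l^{-1}}(2^{k}B)\le[w]_{\mathcal A_1}2^{kn}w_{A_l^{-1}}(B)$, which is straightforward; the paper instead majorizes the right-hand side by $(M_{\alpha n/(n+d+1)}\chi_B)^{(n+d+1)/n}$ and invokes the weighted Muckenhoupt--Wheeden inequality, a heavier tool that packages the same cancellation. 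On $E$ in part \textbf{a)} the paper uses the weak-type estimate of Lemma~14 and optimizes the distribution function, whereas you observe the cleaner fact that $\|I_{\alpha}|a|\|_{\infty}\le C|B|^{\alpha/n}\|\chi_B\|_{p(\cdot)}^{-1}$; this makes the role of the hypothesis $q>np_0/\alpha$ (local integrability of $|y-z|^{(\alpha-n)(q/p_0)'}$) completely transparent. On $E$ in part \textbf{b)} the paper again runs the distribution-function argument with the $\alpha=0$ weak type; you use unweighted $L^{q/p_0}$ boundedness of $T_{0,m}$ and H\"older, shifting the work to the $RH_{(q/p_0)'}$ hypothesis. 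One small imprecision: reverse H\"older is stated for balls, so the bound $(\int_E w^{s'})^{1/s'}\le C|E|^{-1/s}w(E)$ should be obtained by writing $E$ as the union of the $m$ balls $A_iB(x_0,2R)$ and applying $RH$ to each; since $1/s'\le 1$ the sum can be pulled outside the power and one recovers $C|B|^{-p_0^{2}/q}\sum_i w_{A_i^{-1}}(B)$ as claimed. With that adjustment your argument is complete.
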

\begin{proof}
\textbf{a$)$} Let $B = B(x_0, r)$ be the ball which $a$ is supported, we put $B^{\ast}_i = B(A_i x_0, 2r)$ with $i=1, ..., m$. We decompose $\mathbb{R}^{n} = \bigcup_{i=1}^{m} B^{\ast}_{i} \cup R$, where $R = \mathbb{R}^{n} \setminus \left( \bigcup_{i=1}^{m} B^{\ast}_{i} \right)$ and write
\[
\int_{\mathbb{R}^{n}} |T_{\alpha, m} a (x)|^{q_0} w(x) dx = \int_{\bigcup_{i=1}^{m} B^{\ast}_{i}} |T_{\alpha, m} a (x)|^{q_0} w(x) dx + \int_{R} |T_{\alpha, m} a (x)|^{q_0} w(x) dx
\]
\[
 = I_1 + I_2.
 \]
We first estimate $I_1$,
\[
I_1 \leq \sum_{i=1}^{m} \int_{B^{\ast}_{i}} |T_{\alpha, m} a (x)|^{q_0} w(x) dx = \sum_{i=1}^{m} \int_{0}^{\infty} q_0\lambda^{q_0 -1} w(\{ x \in B^{\ast}_{i} : |T_{\alpha, m} a (x)| > \lambda \}) d\lambda
\]
for $i=1, ..., m$, we write $w_{i}(x) = [w_{A_i^{-1}}(x)]^{\frac{n- \alpha}{n}}$, Lemma 14 gives
\[
\leq C \sum_{i=1}^{m} \int_{0}^{\infty} \lambda^{q_0 -1} \min \left\{ w(B^{\ast}_{i}), \lambda^{-\frac{n}{n-\alpha}} \sum_{i=1}^{m} \|a\|_{L^{1}(w_i)}^{\frac{n}{n - \alpha}} \right\} d\lambda
\]
\[
\leq C \left(\sum_{i=1}^{m} w(B^{\ast}_i) \right) \int_{0}^{\beta} \lambda^{q_0 -1} d\lambda + C \left( \sum_{i=1}^{m} \|a\|_{L^{1}(w_i)} \right )^{\frac{n}{n - \alpha}} \int_{\beta}^{\infty} \lambda^{q_0 - 1- \frac{n}{n - \alpha}} d\lambda
\]
taking $\beta = \left( \sum_{i=1}^{m} w(B^{\ast}_i) \right)^{-\frac{n - \alpha}{n}}  \left( \sum_{i=1}^{m} \|a\|_{L^{1}(w_i)} \right )$, we get
\[
I_1 \leq C \left( \sum_{i=1}^{m} w(B^{\ast}_i) \right)^{1 - \frac{(n - \alpha)}{n} q_0}  \left( \sum_{i=1}^{m} \|a\|_{L^{1}(w_i)} \right )^{q_0}.
\]
Now we estimate $\sum_{i=1}^{m} \|a\|_{L^{1}(w_i)}$, by H\"older's inequality and since $a(.)$ is a $(p(.), q/p_0)$ atom we get that
\[
\|a\|_{L^{1}(w_i)} = \int |a(x)| [w_{A_i^{-1}}(x)]^{\frac{n- \alpha}{n}} dx
\]
\[
\leq \left( \int_{B} |a(x)|^{q/p_0} dx \right)^{p_0/q} \left( \int_{B} [w_{A_i^{-1}}(x)]^{\frac{(n- \alpha)}{n} (\frac{q}{p_0})'} dx \right)^{1/(\frac{q}{p_0})'}
\]
\[
\leq |B|^{1/(q/p_0)'} |B|^{p_0/q} \| \chi_{B} \|_{p(.)}^{-1}  \left( \frac{1}{|B|} \int_{B} [w_{A_i^{-1}}(x)]^{\frac{(n- \alpha)}{n} (\frac{q}{p_0})'} dx \right)^{1/(\frac{q}{p_0})'}
\]
the condition $q > \frac{n p_0}{\alpha}$ implies $\frac{(n- \alpha)}{n} (\frac{q}{p_0})' < 1$, then from H\"older's inequality we obtain
\[
\leq |B| \| \chi_{B} \|_{p(.)}^{-1}  \left( \frac{1}{|B|} \int_{B} w_{A_i^{-1}}(x) dx \right)^{\frac{n - \alpha}{n}} = |B|^{\frac{ \alpha}{n}}
\| \chi_{B} \|_{p(.)}^{-1} [w_{A_i^{-1}}(B)]^{\frac{(n- \alpha)}{n}},
\]
so
\[
\left( \sum_{i=1}^{m} \|a\|_{L^{1}(w_i)} \right )^{q_0} \leq |B|^{\frac{ \alpha}{n}q_0}
\| \chi_{B} \|_{p(.)}^{-q_0} \left( \sum_{i=1}^{m}[w_{A_i^{-1}}(B)]^{\frac{(n- \alpha)}{n}} \right)^{q_0}
\]
\[
\leq C |B|^{\frac{ \alpha}{n}q_0}
\| \chi_{B} \|_{p(.)}^{-q_0} \left( \sum_{i=1}^{m}[w_{A_i^{-1}}(B)] \right)^{\frac{(n- \alpha)}{n} q_0}.
\]
Since $w_{A_{i}^{-1}} \in \mathcal{A}_1$ for each $i=1, ..., m$ (see Remark 9) and the weights in $\mathcal{A}_1$ are doubling measures (see Remark 7.7 in \cite{Uribe}), it follows that
\[
I_1 \leq C |B|^{\frac{\alpha}{n}q_0} \| \chi_{B} \|_{p(.)}^{-q_0} \sum_{i=1}^{m} w_{A_{i}^{-1}}(B).
\]
To estimate $I_2$, we start with a pointwise estimate. Let $d = \lfloor n(1/p_0 - 1) \rfloor$.
We denote $k(x,y)=\left\vert x-A_{1}y\right\vert ^{-\alpha
_{1}}...\left\vert x-A_{m}y\right\vert ^{-\alpha _{m}}.$ In view of the moment condition of $a(.)$ we have
\[
T_{\alpha, m} a(x)=\int\limits_{B(x_0, r)}k(x,y)a(y)dy=\int\limits_{B(x_0, r)}\left( k(x,y)-q_{d}\left(
x,y\right) \right) a(y)dy,
\]
\newline
where $q_{d}$ is the degree $d$ Taylor polynomial of the function $
y\rightarrow k(x,y)$ expanded around $x_0$. By the standard estimate of the
remainder term of the Taylor expansion, there exists $\xi $ between $y$ and
$x_0$ such that
\[
\left\vert k(x,y)-q_{d}\left( x,y\right) \right\vert \lesssim \left\vert
y-x_0 \right\vert ^{d+1}\sum\limits_{k_{1}+...+k_{n}=d+1}\left\vert \frac{%
\partial ^{d+1}}{\partial y_{1}^{k_{1}}...\partial y_{n}^{k_{n}}}k(x,\xi
)\right\vert
\]
\[
\leq C \left\vert y-x_0 \right\vert ^{d+1}\left(
\prod\limits_{i=1}^{m}\left\vert x-A_{i}\xi \right\vert ^{-\alpha
_{i}}\right) \left( \sum\limits_{l=1}^{m}\left\vert x-A_{l}\xi \right\vert
^{-1}\right) ^{d+1}.
\]
Now, we decompose $R = \bigcup_{k=1}^{m} R_{k}$ where
$$R_{k} = \{ x \in R : |x - A_k x_0| \leq |x - A_i x_0| \,\, for \,\, all \,\, i \neq k \}.$$
If $x \in R$ then $|x - A_i x_0| \geq 2r$, since $\xi \in B$ it follows that $|A_i x_0 - A_i \xi | \leq  r \leq \frac{1}{2} |x - A_i x_0|$ so
$$|x - A_i \xi| = |x - A_i x_0 + A_i x_0 - A_i \xi| \geq |x - A_i x_0| - |A_i x_0 - A_i \xi| \geq \frac{1}{2} |x - A_i x_0|.$$
If $x \in R$, then $x \in R_{k}$ for some $k$ and since $\alpha_{1}+...+\alpha_{m} = n - \alpha$ we obtain
$$
\left\vert k(x,y)-q_{d}\left( x,y\right) \right\vert  \leq C
\left\vert y-x_0 \right\vert ^{d+1}\left( \prod\limits_{i=1}^{m}\left\vert
x-A_{i}x_0 \right\vert ^{-\alpha _{i}}\right) \left(
\sum\limits_{l=1}^{m}\left\vert x-A_{l}x_0 \right\vert ^{-1}\right) ^{d+1}
$$
$$
\leq C r^{d+1}\left\vert x-A_{k}x_0 \right\vert ^{-n+\alpha -d-1},
$$
this inequality allow us to conclude that
\begin{eqnarray*}
\left\vert T_{\alpha, m}a(x)\right\vert  \leq C\left\Vert a \right\Vert
_{1}r^{d+1}\left\vert x-A_{k}x_0\right\vert ^{-n+\alpha -d-1}\\
\leq C \left\vert B \right\vert ^{1-\frac{p_0}{q}}\left\Vert
a \right\Vert _{q/p_{0}}r^{d+1}\left\vert x-A_{k}x_0\right\vert ^{-n+\alpha -d-1},
\end{eqnarray*}
since $\|a \|_{q/p_0} \leq |B|^{p_0/q} \|\chi_{B} \|_{p(\cdot)}^{-1}$, we have
\begin{eqnarray}
\left\vert T_{\alpha, m}a(x)\right\vert  \leq C\frac{r^{n+d+1}}{\left\Vert \chi _{B}\right\Vert _{p(.)}}%
\left\vert x-A_{k}x_0 \right\vert ^{-n+\alpha -d-1} \label{Tmalpha}
\end{eqnarray}
\begin{eqnarray*}
\leq C \frac{\left( M_{\frac{\alpha n}{n+d+1}}\left( \chi _{B}\right)
(A_{k}^{-1}x)\right) ^{\frac{n+d+1}{n}}}{\left\Vert \chi _{B}\right\Vert
_{p(.)}}, \,\,\,\,\, if \,\, x \in R_{k}.
\end{eqnarray*}
This pointwise estimate gives
$$
I_2 = \int_{R} |T_{\alpha, m} a (x)|^{q_0} w(x) dx
\leq C \sum_{k=1}^{m} \int_{\mathbb{R}^{n}}
\frac{\left( M_{\frac{\alpha n}{n+d+1}}\left( \chi _{B}\right)
(A_{k}^{-1}x)\right) ^{q_0 \frac{n+d+1}{n}}}{\left\Vert \chi _{B}\right\Vert
_{p(.)}^{q_0}} w(x) dx
$$
$$
= C \| \chi_{B} \|_{p(.)}^{-q_0} \sum_{k=1}^{m} \int_{\mathbb{R}^{n}}
\left( M_{\frac{\alpha n}{n+d+1}}\left( \chi _{B}\right)
(x)\right) ^{q_0 \frac{n+d+1}{n}} w_{A_{k}^{-1}}(x) dx.
$$
Since $d = \lfloor n(1/p_0 - 1) \rfloor$, we have $q_0 \frac{n+d+1}{n} > 1$. We write $\widetilde{q} = q_0 \frac{n+d+1}{n}$ and let $\frac{1}{\widetilde{p}} =
\frac{1}{\widetilde{q}} + \frac{\alpha}{n+d+1}$, so $\frac{\widetilde{p}}{\widetilde{q}} = \frac{p_0}{q_0}$ and $(w_{A_{k}^{-1}})^{1/\widetilde{q}} \in \mathcal{A}_{\widetilde{p}, \widetilde{q}}$. From Theorem 3 in \cite{muck}, we obtain
$$
\int_{\mathbb{R}^{n}}
\left( M_{\frac{\alpha n}{n+d+1}}\left( \chi _{B}\right)(x)\right) ^{q_0 \frac{n+d+1}{n}} w_{A_{k}^{-1}}(x) dx
\leq \left( \int_{\mathbb{R}^{n}} \chi_{B}(x)  [w_{A_{k}^{-1}}(x)]^{p_0/q_0} dx  \right)^{q_0/p_0}
$$
$$
\leq |B|^{\frac{\alpha}{n} q_0} w_{A_{k}^{-1}}(B),
$$
where H\"older's inequality gives the last inequality.

\textbf{b$)$} As in \textbf{a$)$} we decompose $\mathbb{R}^{n} = \bigcup_{i=1}^{m} B^{\ast}_{i} \cup R$, where $R = \mathbb{R}^{n} \setminus \left( \bigcup_{i=1}^{m} B^{\ast}_{i} \right)$, but now we write
\[
\int_{\mathbb{R}^{n}} |T_{0, m} a (x)|^{p_0} w(x) dx = \int_{\bigcup_{i=1}^{m} B^{\ast}_{i}} |T_{0, m} a (x)|^{p_0} w(x) dx + \int_{R} |T_{0, m} a (x)|^{p_0} w(x) dx
\]
\[
 = I_1 + I_2.
\]
A similar computation to done in \textbf{a$)$} allows us to obtain
\[
I_1 \leq C \left( \sum_{i=1}^{m} w(B^{\ast}_i) \right)^{1 - p_0}  \left( \sum_{i=1}^{m} \int_B |a(x)| w_{A_{i}^{-1}}(x) dx \right )^{p_0}.
\]
To estimate the last integral we use that $a(\cdot)$ is an $(p(\cdot), q/p_0)$ atom and $w \in RH_{\left( \frac{q}{p_0}\right)'}$, so
\[
\int_B |a(x)| w_{A_{i}^{-1}}(x) dx \leq |B|^{1/(q/p_0)'} |B|^{p_0/q} \| \chi_{B} \|_{p(.)}^{-1}  \left( \frac{1}{|B|} \int_{B} [w_{A_i^{-1}}(x)]^{ (\frac{q}{p_0})'} dx \right)^{1/(\frac{q}{p_0})'}
\]
\[
\leq C \| \chi_{B} \|_{p(.)}^{-1} w_{A_i^{-1}}(B).
\]
Therefore
\[
I_1 \leq C \| \chi_{B} \|_{p(.)}^{-p_0} \sum_{i=1}^{m} w_{A_{i}^{-1}}(B).
\]
To estimate $I_2$, following a similar argument to that used in \textbf{a$)$}, we get
\[
I_2 = \int_{R} |T_{0, m}a(x)|^{p_0} w(x) dx \leq C \| \chi_B \|_{p(\cdot)}^{-p_0}\sum_{i=1}^{m} \int_{\mathbb{R}^{n}} [M(\chi_B)(x)]^{p_0 \frac{n+d+1}{n}} w_{A_{i}^{-1}}(x) dx,
\]
where $d= \lfloor n(\frac{1}{p_0}-1) \rfloor$, so $p_0 \frac{n+d+1}{n} > 1$. Finally, since $w_{A_{i}^{-1}} \in \mathcal{A}_1 \subset \mathcal{A}_{p_0 \frac{n+d+1}{n}}$ for each $i=1, ..., m$, from Theorem 9 in \cite{Muck}, it follows that
\[
I_2 \leq C \| \chi_{B} \|_{p(.)}^{-p_0} \sum_{i=1}^{m} w_{A_{i}^{-1}}(B).
\]
The proof is therefore concluded.
\end{proof}
\begin{proposition}
For $0 \leq \alpha < n$ and $m >1$, let $T_{\alpha, m}$ be the operator defined by (\ref{T}). Let $p(\cdot) \in M\mathcal{P}_0$, with $0 < p_0< \frac{n}{n+ \alpha}$, such that $p(A_ix)=p(x)$ for all $i=1, ..., m$. If $w \in \mathcal{A}_1 \cap L^{\frac{p_0}{q_0}(\frac{p(\cdot)}{p_0})'}(\mathbb{R}^{n})$ and $0 < \alpha <n$ or $w \in \mathcal{A}_1 \cap L^{(\frac{p(\cdot)}{p_0})'}(\mathbb{R}^{n}) \cap RH_{(q/p_0)'}$ and $\alpha=0$, then for $\frac{1}{q_0} = \frac{1}{p_0} - \frac{\alpha}{n}$ we have
\[
\| T_{\alpha, m} f \|_{L^{q_0}(w)} \leq C \sum_{i=1}^{m} \| f \|_{H^{p_0}\left( [w_{A_i^{-1}}]^{\frac{p_{0}}{q_{0}}} \right)},
\]
for all $f \in H^{p_0, q/p_0}_{fin}(w)$, where $q$ is sufficiently large.
\end{proposition}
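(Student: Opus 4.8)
The plan is to argue through the finite atomic decomposition combined with estimates in weighted Hardy spaces. Fix $f\in H^{p_0,q/p_0}_{fin}(w)$ and take any representation $f=\sum_{j=1}^{k}\lambda_j a_j$ as a finite linear combination of $(p(\cdot),q/p_0)$-atoms, $a_j$ supported on a ball $B_j$ (here $q$ is large enough for Lemma 15 and for the atomic theory of \cite{Uribe}). The hypothesis $0<p_0<\frac{n}{n+\alpha}$ forces $q_0<1$, since $\frac1{q_0}=\frac1{p_0}-\frac\alpha n>\frac{n+\alpha}{n}-\frac\alpha n=1$; hence $g\mapsto\|g\|_{L^{q_0}(w)}^{q_0}$ is subadditive, and by linearity of $T_{\alpha,m}$ together with Lemma 15 applied to each atom we get, when $0<\alpha<n$,
\[
\|T_{\alpha,m}f\|_{L^{q_0}(w)}^{q_0}\le\sum_{j=1}^{k}\lambda_j^{q_0}\int_{\mathbb{R}^n}|T_{\alpha,m}a_j(x)|^{q_0}w(x)\,dx\le C\sum_{i=1}^{m}\sum_{j=1}^{k}\lambda_j^{q_0}|B_j|^{\frac{\alpha}{n}q_0}\|\chi_{B_j}\|_{p(\cdot)}^{-q_0}w_{A_i^{-1}}(B_j),
\]
and, when $\alpha=0$ (so that $q_0=p_0$), using part b) of Lemma 15,
\[
\|T_{0,m}f\|_{L^{p_0}(w)}^{p_0}\le C\sum_{i=1}^{m}\sum_{j=1}^{k}\lambda_j^{p_0}\|\chi_{B_j}\|_{p(\cdot)}^{-p_0}w_{A_i^{-1}}(B_j).
\]

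The next step is to recast the ball measures $w_{A_i^{-1}}(B_j)$ into the shape occurring in the finite atomic norm of a weighted Hardy space. The decisive elementary fact is that for every $u\in\mathcal{A}_1$ and every ball $B$, writing $m_B$ for the essential infimum of $u$ on $B$, the estimate (\ref{A1condequiv}) gives $u(B)\le C|B|\,m_B$ while trivially $u^{p_0/q_0}(B)\ge|B|\,m_B^{p_0/q_0}$; eliminating $m_B$ and using the exact cancellation $\frac{\alpha}{n}q_0+1-\frac{q_0}{p_0}=0$ we obtain
\[
|B|^{\frac{\alpha}{n}q_0}\,u(B)\le C\,\bigl(u^{p_0/q_0}(B)\bigr)^{q_0/p_0},
\]
with $C$ depending only on the $\mathcal{A}_1$ constant of $u$, hence uniformly for $u=w_{A_i^{-1}}$ since the $\mathcal{A}_1$ constant is unchanged under an orthogonal change of variables (Remark 9). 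Setting $v_i=[w_{A_i^{-1}}]^{p_0/q_0}$ and $g=\sum_{j=1}^{k}\lambda_j^{p_0}\|\chi_{B_j}\|_{p(\cdot)}^{-p_0}\chi_{B_j}$, the double sum in the first display is then bounded by $C\sum_{i}\sum_{j}\bigl(\lambda_j^{p_0}\|\chi_{B_j}\|_{p(\cdot)}^{-p_0}v_i(B_j)\bigr)^{q_0/p_0}$; since $q_0/p_0>1$ we may pull the exponent out of the $j$-sum, getting $C\sum_{i}\|g\|_{L^1(v_i)}^{q_0/p_0}$, and, $q_0/p_0>1$ once more, $\sum_i\|g\|_{L^1(v_i)}^{q_0/p_0}\le\bigl(\sum_i\|g\|_{L^1(v_i)}\bigr)^{q_0/p_0}=\|g\|_{L^1(V)}^{q_0/p_0}$ with $V=\sum_{i=1}^{m}v_i$. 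When $\alpha=0$ the double sum is already exactly $C\sum_i\|g\|_{L^1(w_{A_i^{-1}})}=C\|g\|_{L^1(W)}$ with $W=\sum_{i=1}^{m}w_{A_i^{-1}}$.

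Now I take the infimum over all finite atomic decompositions of $f$. Since $0<p_0/q_0<1$ (in the case $\alpha>0$), each $v_i=[w_{A_i^{-1}}]^{p_0/q_0}$ belongs to $\mathcal{A}_1$ by Remark 9 and Remark 10, hence so does $V$ (a finite sum of $\mathcal{A}_1$ weights lies in $\mathcal{A}_1$, by sublinearity of $M$); moreover $p(A_ix)=p(x)$ together with parts 4 and 5 of Lemma 1 gives $\|v_i\|_{(p(\cdot)/p_0)'}=\|w\|_{\frac{p_0}{q_0}(p(\cdot)/p_0)'}^{p_0/q_0}<\infty$, so $V\in L^{(p(\cdot)/p_0)'}$; in the case $\alpha=0$ one has $v_i=w_{A_i^{-1}}$ and likewise $W\in\mathcal{A}_1\cap L^{(p(\cdot)/p_0)'}$ (now from $w\in L^{(p(\cdot)/p_0)'}$, using Remark 9 and part 5 of Lemma 1). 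Since $H^{p_0,q/p_0}_{fin}(V)=H^{p(\cdot),q/p_0}_{fin}(\mathbb{R}^n)$ as sets (Lemma 7.3 in \cite{Uribe}), $f$ belongs to $H^{p_0,q/p_0}_{fin}(V)$, and taking the infimum in the estimate above yields $\|T_{\alpha,m}f\|_{L^{q_0}(w)}^{q_0}\le C\|f\|_{H^{p_0,q/p_0}_{fin}(V)}^{q_0}\le C\|f\|_{H^{p_0}(V)}^{q_0}$, the last inequality being Lemma 7.11 in \cite{Uribe}. Taking $q_0$-th roots and then using $\|f\|_{H^{p_0}(V)}^{p_0}=\int_{\mathbb{R}^n}[M_\phi f(x)]^{p_0}V(x)\,dx=\sum_{i=1}^{m}\|f\|_{H^{p_0}(v_i)}^{p_0}$ together with the crude bound $\bigl(\sum_{i=1}^{m}c_i^{p_0}\bigr)^{1/p_0}\le m^{1/p_0}\sum_{i=1}^{m}c_i$ for the $m$-element sum, we conclude $\|T_{\alpha,m}f\|_{L^{q_0}(w)}\le C\sum_{i=1}^{m}\|f\|_{H^{p_0}([w_{A_i^{-1}}]^{p_0/q_0})}$; the case $\alpha=0$ is handled identically with $W$, $p_0$ in place of $V$, $q_0$.

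The main obstacle is the second step: trading the crude bound coming from Lemma 15 — where $w_{A_i^{-1}}(B_j)$ enters to the first power together with the volume factor $|B_j|^{\frac{\alpha}{n}q_0}$ — for the homogeneous quantity $\bigl(v_i(B_j)\bigr)^{q_0/p_0}$ that is a genuine summand of a finite weighted atomic norm. This rests on the $\mathcal{A}_1$-reverse estimate above and on the exact relation $\frac{\alpha}{n}q_0+1-\frac{q_0}{p_0}=0$, and then on carefully keeping track of the several sub- and super-additivity inequalities for exponents below and above $1$ (the quasi-norm character of $L^{q_0}(w)$; the exponent $q_0/p_0>1$ appearing inside; the $m$-fold sum over $i$), so that everything collapses onto the single weighted finite-atomic-norm bound of \cite{Uribe} applied to the auxiliary weight $V=\sum_{i=1}^m[w_{A_i^{-1}}]^{p_0/q_0}$ — whose membership in $\mathcal{A}_1\cap L^{(p(\cdot)/p_0)'}$ is precisely the point at which the hypotheses $p(A_ix)=p(x)$ and $w\in L^{\frac{p_0}{q_0}(p(\cdot)/p_0)'}$ get used.
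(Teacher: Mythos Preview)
Your argument is correct and actually takes a cleaner route than the paper's at the decisive step. Both proofs start the same way: decompose $f=\sum_j\lambda_j a_j$, use $q_0<1$-subadditivity and Lemma~15 to arrive at
\[
\|T_{\alpha,m}f\|_{L^{q_0}(w)}^{q_0}\le C\sum_{i=1}^{m}\sum_{j=1}^{k}\lambda_j^{q_0}\,|B_j|^{\frac{\alpha}{n}q_0}\,\|\chi_{B_j}\|_{p(\cdot)}^{-q_0}\,w_{A_i^{-1}}(B_j).
\]
From here the paper rewrites this as an integral, passes from $\ell^{q_0}$ to $\ell^{p_0}$, replaces $|B_j|^{\alpha/n}\chi_{B_j}$ by $(M_{\alpha p_0/2}\chi_{B_j})^{2/p_0}$, and then invokes the weighted \emph{vector-valued} inequality for the fractional maximal operator (Lemma~13) to land on $\|g\|_{L^1(v_i)}^{q_0/p_0}$ with $v_i=[w_{A_i^{-1}}]^{p_0/q_0}$. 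Your route avoids Lemma~13 altogether: the elementary $\mathcal A_1$ estimate $u(B)\le C|B|\,\mathrm{ess\,inf}_B u$ combined with $u^{p_0/q_0}(B)\ge|B|(\mathrm{ess\,inf}_B u)^{p_0/q_0}$ and the exact identity $\frac{\alpha}{n}q_0+1-\frac{q_0}{p_0}=0$ already gives $|B|^{\alpha q_0/n}u(B)\le C\,(u^{p_0/q_0}(B))^{q_0/p_0}$, which is precisely what the vector-valued machinery is used to produce. This is a genuine simplification.

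There is also a second, subtler difference. After reaching $\sum_i\|g\|_{L^1(v_i)}^{q_0/p_0}$ the paper says ``take the infimum over all such decompositions'' and writes the conclusion with $\sum_i\|f\|_{H^{p_0}(v_i)}$; but one cannot literally split $\inf_{\text{decomp}}\sum_i$ into $\sum_i\inf_{\text{decomp}}$. Your device of first absorbing the $i$-sum into the single weight $V=\sum_i v_i$ (using $\sum_i c_i^{q_0/p_0}\le(\sum_i c_i)^{q_0/p_0}$), then applying Lemma~7.11 in \cite{Uribe} once to $V\in\mathcal A_1\cap L^{(p(\cdot)/p_0)'}$, and finally unpacking $\|f\|_{H^{p_0}(V)}^{p_0}=\sum_i\|f\|_{H^{p_0}(v_i)}^{p_0}$ with the crude bound $(\sum c_i^{p_0})^{1/p_0}\le m^{1/p_0}\sum c_i$, makes this passage rigorous. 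So your proof not only bypasses the fractional-maximal vector-valued inequality but also tightens the logic at the infimum step.
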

\begin{proof}
Given $f \in H^{p_0, q/p_0}_{fin}(w)$ we have that $f = \sum_{j=1}^{k} \lambda_j a_j$, where $a_j$ is a $(p(\cdot), q/p_0)$-atom supported on a ball $B_j$. Since $0 < q_0 < 1$, Lemma 15 gives
$$
\| T_{\alpha, m} f \|_{L^{q_0}(w)}^{q_0} = \int_{\mathbb{R}^{n}} |T_{\alpha, m}f(x)|^{q_0} w(x) dx \leq \sum_{j=1}^{k} \lambda_{j}^{q_0} \int_{\mathbb{R}^{n}} |T_{\alpha, m}a_j(x)|^{q_0} w(x) dx
$$
$$
\leq C \sum_{i=1}^{m} \sum_{j=1}^{k} \lambda_{j}^{q_0}  |B_j|^{\frac{\alpha}{n}q_0} \| \chi_{B_j} \|_{p(.)}^{-q_0} w_{A_{i}^{-1}}(B_j)
$$
$$
= C \sum_{i=1}^{m} \int_{\mathbb{R}^{n}} \left(\sum_{j=1}^{k} \lambda_{j}^{q_0}  |B_j|^{\frac{\alpha}{n}q_0} \| \chi_{B_j} \|_{p(.)}^{-q_0} \chi_{B_j}(x) \right) w_{A_{i}^{-1}}(x) dx
$$
the embedding $l^{p_0} \hookrightarrow l^{q_0}$ gives
\begin{equation}
\leq C \sum_{i=1}^{m} \int_{\mathbb{R}^{n}} \left\{\sum_{j=1}^{k} \left(\frac{\lambda_{j}  |B_j|^{\frac{\alpha}{n}} \chi_{B_j}(x)}{\| \chi_{B_j} \|_{p(.)}}\right)^{p_0} \right\}^{\frac{q_0}{p_0}} w_{A_{i}^{-1}}(x) dx \label{desig}
\end{equation}
it is clear that if $\alpha=0$, then the proposition follows from Lemma 7.11 in \cite{Uribe}, since $w_{A^{-1}_i} \in \mathcal{A}_1 \cap L^{(\frac{p(\cdot)}{p_0})'}(\mathbb{R}^{n})$ and $H^{p_0, q/p_0}_{fin}(w) = H^{p_0, q/p_0}_{fin}(w_{A_i^{-1}})$ as sets. For the case $0 < \alpha < n$, a computation gives $|B_j|^{\frac{\alpha}{n}} \chi_{B_j}(x) \leq \left(M_{\frac{\alpha p_0}{2}} (\chi_{B_j})(x)\right)^{\frac{2}{p_0}}$, so (\ref{desig})
$$
\leq C \sum_{i=1}^{m} \int_{\mathbb{R}^{n}} \left\{\sum_{j=1}^{k} \left( \frac{\lambda_{j}  \left(M_{\frac{\alpha p_0}{2}} (\chi_{B_j})(x)\right)^{\frac{2}{p_0}}}{\| \chi_{B_j} \|_{p(.)}}\right)^{p_0} \right\}^{\frac{q_0}{p_0}} w_{A_{i}^{-1}}(x) dx
$$
$$
= C \sum_{i=1}^{m} \left\| \left\{\sum_{j=1}^{k} \frac{\lambda_{j}^{p_0}  \left(M_{\frac{\alpha p_0}{2}} (\chi_{B_j})(.)\right)^{2}}{\| \chi_{B_j} \|_{p(.)}^{p_0}} \right\}^{\frac{1}{2}} \right\|^{\frac{2q_0}{p_0}}_{L^{\frac{2q_0}{p_0}}(w_{A_{i}^{-1}})}
$$
because $\frac{p_0}{2q_0} = \frac{1}{2} - \frac{\alpha p_0}{2n}$ and $[w_{A_i^{-1}}]^{\frac{p_0}{2q_0}} \in \mathcal{A}_{2, \frac{2q_0}{p_0}}$, by Lemma 13 we have
$$
\leq C \sum_{i=1}^{m} \left\| \left\{\sum_{j=1}^{k} \frac{\lambda_{j}^{p_0}  \chi_{B_j}(.)}{\| \chi_{B_j} \|_{p(.)}^{p_0}} \right\}^{\frac{1}{2}} \right\|^{\frac{2q_0}{p_0}}_{L^{2}\left([w_{A_{i}^{-1}}]^{\frac{p_0}{q_0}} \right)}
$$
$$
=C \sum_{i=1}^{m} \left\| \sum_{j=1}^{k} \frac{\lambda_{j}^{p_0}  \chi_{B_j}(.)}{\| \chi_{B_j} \|_{p(.)}^{p_0}}  \right\|^{\frac{q_0}{p_0}}_{L^{1}\left([w_{A_{i}^{-1}}]^{\frac{p_0}{q_0}} \right)}.
$$
Since, for each $i=1, ..., m$, $[w_{A_i^{-1}}]^{\frac{p_0}{q_0}} \in \mathcal{A}_1 \cap L^{(\frac{p(\cdot)}{p_0})'}(\mathbb{R}^{n})$ (see Lemma 1, Remark 9 and Remark 10), and $H^{p_0, q/p_0}_{fin}(w) = H^{p_0, q/p_0}_{fin}\left([w_{A_i^{-1}}]^{\frac{p_0}{q_0}} \right)$ as sets,  by Lemma 7.11 in \cite{Uribe}, we can take the infimum over all such decompositions to get
\[
\| T_{\alpha, m} f \|_{L^{q_0}(w)} \leq C \sum_{i=1}^{m} \| f \|_{H^{p_0}\left( [w_{A_i^{-1}}]^{\frac{p_{0}}{q_{0}}} \right)},
\]
for all $f \in H^{p_0, q/p_0}_{fin}(w)$.
\end{proof}

For $0 < \alpha < n$, let $I_{\alpha}$ be the Riesz potential given by
\begin{eqnarray}
I_{\alpha}f(x) = \int_{\mathbb{R}^{n}} \frac{f(y)}{|x-y|^{n- \alpha}} dy, \label{riesz potential}
\end{eqnarray}
where $f \in L^{s}(\mathbb{R}^{n})$, and $1 \leq s < \frac{n}{\alpha}$. 

We introduce the following discrete maximal, given $\phi \in \mathcal{S}(\mathbb{R}^{n})$ and $f \in \mathcal{S}'(\mathbb{R}^{n})$, we define
$$M^{d}_{\phi}f(x) = \sup_{j \in \mathbb{Z}} \left| (\phi^{j} \ast f) (x) \right|,$$
where $\phi^{j}(x) = 2^{jn} \phi(2^{j}x).$ From Lemma 3.2 and Proof of Theorem 3.3 in \cite{nakai}, it follows that for all $f \in  \mathcal{S}'(\mathbb{R}^{n})$ and all $0 < \theta < 1$
\begin{equation}
\mathcal{M}_N f(x) \leq C \left[ M \left(\left(M^{d}_{\phi}f \right)^{\theta}\right)(x)\right]^{\frac{1}{\theta}}, \,\,\, for \,\, all \,\, x \in \mathbb{R}^{n}, \label{maxdiadica}
\end{equation}
if $N$ is sufficiently large. This inequality gives the following
\begin{lemma}
If $w \in \mathcal{A}_1$ and $0 < q_0 < 1$, then $\| f \|_{H^{q_0}(w)} \leq C \| M^{d}_{\phi}f \|_{L^{q_0}(w)}$.
\end{lemma}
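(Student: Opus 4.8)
The plan is to combine the pointwise majorization \eqref{maxdiadica} with the classical weighted norm inequality for the Hardy--Littlewood maximal operator on $L^{r}(w)$ when $w\in\mathcal{A}_1$ and $r>1$. First I would recall that, for $N$ sufficiently large, $\|f\|_{H^{q_0}(w)}=\|M_{\phi}f\|_{L^{q_0}(w)}\simeq\|\mathcal{M}_N f\|_{L^{q_0}(w)}$, so it suffices to bound $\|\mathcal{M}_N f\|_{L^{q_0}(w)}$ by $\|M^{d}_{\phi}f\|_{L^{q_0}(w)}$. We may assume $\|M^{d}_{\phi}f\|_{L^{q_0}(w)}<\infty$, since otherwise there is nothing to prove.

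Next, since $0<q_0<1$, I would fix $\theta$ with $0<\theta<q_0$ and apply \eqref{maxdiadica} with this value of $\theta$, choosing $N$ large enough that both \eqref{maxdiadica} and the comparability recalled above hold. Raising to the power $q_0$ and integrating against $w$,
\[
\|\mathcal{M}_N f\|_{L^{q_0}(w)}^{q_0}\leq C\int_{\mathbb{R}^{n}}\left[M\!\left((M^{d}_{\phi}f)^{\theta}\right)(x)\right]^{q_0/\theta}w(x)\,dx.
\]
Because $q_0/\theta>1$ and $\mathcal{A}_1\subset\mathcal{A}_{q_0/\theta}$, the maximal operator $M$ is bounded on $L^{q_0/\theta}(w)$; applying this to the function $(M^{d}_{\phi}f)^{\theta}$, which lies in $L^{q_0/\theta}(w)$ by our finiteness assumption, yields
\[
\int_{\mathbb{R}^{n}}\left[M\!\left((M^{d}_{\phi}f)^{\theta}\right)(x)\right]^{q_0/\theta}w(x)\,dx\leq C\int_{\mathbb{R}^{n}}(M^{d}_{\phi}f(x))^{q_0}w(x)\,dx=C\,\|M^{d}_{\phi}f\|_{L^{q_0}(w)}^{q_0}.
\]
Chaining the two displays and taking $q_0$-th roots gives $\|\mathcal{M}_N f\|_{L^{q_0}(w)}\leq C\|M^{d}_{\phi}f\|_{L^{q_0}(w)}$, and the comparability $\|f\|_{H^{q_0}(w)}\simeq\|\mathcal{M}_N f\|_{L^{q_0}(w)}$ finishes the argument.

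There is essentially no serious obstacle here; the only delicate point is the order of quantifiers: one must fix $\theta<q_0$ \emph{before} invoking \eqref{maxdiadica}, and then select $N$ large enough (depending on $\theta$) so that \eqref{maxdiadica} applies and simultaneously $\|M_{\phi}f\|_{L^{q_0}(w)}\simeq\|\mathcal{M}_N f\|_{L^{q_0}(w)}$. Everything else is a routine application of $\mathcal{A}_1\subset\mathcal{A}_r$ for $r>1$ and Muckenhoupt's theorem.
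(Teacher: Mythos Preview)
Your proof is correct and follows essentially the same route as the paper's: fix $0<\theta<q_0$, invoke the pointwise bound \eqref{maxdiadica}, and then apply Muckenhoupt's weighted inequality for $M$ on $L^{q_0/\theta}(w)$ using $\mathcal{A}_1\subset\mathcal{A}_{q_0/\theta}$. The paper's proof is simply a terse one-line version of exactly this argument.
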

\begin{proof} Let $0 < \theta < q_0$. Since $\mathcal{A}_1 \subset \mathcal{A}_{\frac{q_0}{\theta}}$, then the lemma follows from Theorem 9 in \cite{Muck}.
\end{proof}

\begin{proposition} Let $0 < \alpha < n$. If $I_{\alpha}$ is the Riesz potential defined in (\ref{riesz potential}) and $a(\cdot)$ is a $(p(\cdot),q/p_0)$-atom, $ \frac{q}{p_0} >\frac{n}{\alpha}$, such that $\int x^{\beta} a(x) dx = 0$ for all $|\beta| \leq 2 \lfloor n (\frac{1}{q_0} -1 )\rfloor +3+ \lfloor \alpha \rfloor+n $, where $\frac{1}{q_0}=\frac{1}{p_0} - \frac{\alpha}{n}$, then
\begin{equation}
M_{\phi}^{d}(I_{\alpha}a)(x) \leq C |B|^{\frac{\alpha}{n}} \| \chi_{B}\|_{p(\cdot)}^{-1}\left[M(\chi_B)(x) \right]^{\frac{n+k+1}{n}}, \,\,\, if \,\, x \in \mathbb{R}^{n} \setminus B(x_0, 2r), \label{estimate0}
\end{equation}
where $B=B(x_0, r)$ is the ball which $a(\cdot)$ is supported and $k= \lfloor n (\frac{1}{q_0} -1 )\rfloor$.
\end{proposition}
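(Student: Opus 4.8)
The plan is to pass through the convolution structure of $I_{\alpha}$. Writing $K_{\alpha}(y)=|y|^{-(n-\alpha)}$, so that $I_{\alpha}a=K_{\alpha}\ast a$, we have $\phi^{j}\ast I_{\alpha}a=\Psi_{j}\ast a$, where $\Psi_{j}:=\phi^{j}\ast K_{\alpha}$; a change of variables gives the scaling identity $\Psi_{j}(x)=2^{j(n-\alpha)}(I_{\alpha}\phi)(2^{j}x)$, and since $\partial^{\gamma}(K_{\alpha}\ast\phi)=K_{\alpha}\ast\partial^{\gamma}\phi$ one obtains $\partial^{\gamma}\Psi_{j}(x)=2^{j(n-\alpha+|\gamma|)}\,[I_{\alpha}(\partial^{\gamma}\phi)](2^{j}x)$ for every multi-index $\gamma$. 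Recall also that $M^{d}_{\phi}(I_{\alpha}a)(x)=\sup_{j\in\mathbb{Z}}|(\phi^{j}\ast I_{\alpha}a)(x)|$.

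The first step is a decay estimate: if $g\in\mathcal{S}(\mathbb{R}^{n})$ has $\int y^{\delta}g(y)\,dy=0$ for all $|\delta|\leq\ell$, then $|I_{\alpha}g(z)|\leq C(1+|z|)^{-(n-\alpha+\ell+1)}$. This follows by splitting the defining integral into the regions $|y|<|z|/2$ and $|y|\geq|z|/2$: on the first one subtracts from $y\mapsto|z-y|^{-(n-\alpha)}$ its degree-$\ell$ Taylor polynomial about $y=0$ (legitimate since that function is smooth there), bounds the remainder by $C|y|^{\ell+1}|z|^{-(n-\alpha+\ell+1)}$, and uses the moment conditions to annihilate the polynomial together with $\int|y|^{\ell+1}|g(y)|\,dy<\infty$; on the second region the rapid decay of $g$ dominates the locally integrable kernel. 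Applying this with $g=\partial^{\gamma}\phi$ and $|\gamma|=\ell+1$, which has vanishing moments up to order $\ell=|\gamma|-1$ by integration by parts, we get the uniform bound
\[
|\partial^{\gamma}\Psi_{j}(x)|\leq C\,2^{j(n-\alpha+|\gamma|)}\bigl(1+2^{j}|x|\bigr)^{-(n-\alpha+|\gamma|)},\qquad j\in\mathbb{Z}.
\]

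Now fix $x\notin B(x_{0},2r)$, put $\rho=|x-x_{0}|\geq 2r$, and set $L=2\lfloor n(\frac{1}{q_{0}}-1)\rfloor+3+\lfloor\alpha\rfloor+n$, so $a$ has vanishing moments up to order $L$. Using that condition we write $(\phi^{j}\ast I_{\alpha}a)(x)=\int_{B}\bigl(\Psi_{j}(x-y)-P_{L}(y)\bigr)a(y)\,dy$, where $P_{L}$ is the degree-$L$ Taylor polynomial of $y\mapsto\Psi_{j}(x-y)$ about $x_{0}$, so $|\Psi_{j}(x-y)-P_{L}(y)|\leq C|y-x_{0}|^{L+1}\max_{|\gamma|=L+1}|\partial^{\gamma}\Psi_{j}(x-\xi)|$ for some $\xi$ on the segment joining $x_{0}$ and $y$; then $\xi\in B$ and $|x-\xi|\geq\rho/2$. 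Combining this with the displayed derivative bound and with $\int_{B}|a|\leq|B|\,\|\chi_{B}\|_{p(\cdot)}^{-1}\leq C r^{n}\|\chi_{B}\|_{p(\cdot)}^{-1}$ (from the atom's size estimate and H\"older's inequality), we find
\[
|(\phi^{j}\ast I_{\alpha}a)(x)|\leq C\,r^{\,L+1+n}\,\|\chi_{B}\|_{p(\cdot)}^{-1}\;2^{j(n-\alpha+L+1)}\bigl(1+2^{j}\rho\bigr)^{-(n-\alpha+L+1)}.
\]
Taking the supremum over $j$ and using $\sup_{t>0}\bigl(t/(1+t)\bigr)^{s}\leq 1$ produces a factor $\rho^{-(n-\alpha+L+1)}$; since $L-\alpha-k>0$ (in fact $\geq 3$) and $\rho\geq 2r$, we trade the surplus decay for powers of $r$ through $\rho^{-(n-\alpha+L+1)}=\rho^{-(n+k+1)}\rho^{-(L-\alpha-k)}\leq(2r)^{-(L-\alpha-k)}\rho^{-(n+k+1)}$, which gives $M^{d}_{\phi}(I_{\alpha}a)(x)\leq C\,r^{\,n+\alpha+k+1}\,\rho^{-(n+k+1)}\,\|\chi_{B}\|_{p(\cdot)}^{-1}$. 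Since $|B|^{\alpha/n}\approx r^{\alpha}$, and testing against the ball $B(x,2\rho)\supset B$ shows $M(\chi_{B})(x)\gtrsim(r/\rho)^{n}$ for $x\notin B(x_{0},2r)$, this last bound is dominated by $C|B|^{\alpha/n}\|\chi_{B}\|_{p(\cdot)}^{-1}[M(\chi_{B})(x)]^{(n+k+1)/n}$, which is (\ref{estimate0}).

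The main difficulty is the decay estimate for $I_{\alpha}g$ in the second paragraph: the kernel $|y|^{-(n-\alpha)}$ is not integrable and is singular, so one must keep the Taylor expansion away from the singularity and verify that the far region, where only the Schwartz decay of $g$ is available, contributes no worse than the main term. Everything else is bookkeeping with the dilation of $\phi^{j}$ and with the (deliberately generous) moment count, for which all that is actually needed here is $L\geq k+\lceil\alpha\rceil$.
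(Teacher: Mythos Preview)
Your argument is correct, and it takes a genuinely different route from the paper's proof. The paper establishes that $I_{\alpha}a$ behaves like a molecule: it derives the pointwise decay $|I_{\alpha}a(x)|\lesssim r^{\alpha}\|\chi_{B}\|_{p(\cdot)}^{-1}(1+|x-x_{0}|/r)^{-2n-2k-3}$ outside $2B$, the local size control $\|I_{\alpha}a\|_{L^{sn/(n-s\alpha)}(2B)}\lesssim |B|^{(n-s\alpha)/sn}|B|^{\alpha/n}\|\chi_{B}\|_{p(\cdot)}^{-1}$, and invokes the Taibleson--Weiss result that $\int x^{\beta}I_{\alpha}a=0$ for $|\beta|\leq k$; it then appeals to the molecule--to--maximal estimate from the proof of Theorem~5.2 in Nakai--Sawano to conclude. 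Your approach bypasses the molecule framework entirely: by writing $\phi^{j}\ast I_{\alpha}a=(\phi^{j}\ast K_{\alpha})\ast a$ you transfer the Riesz kernel onto the Schwartz side, prove the needed smoothness and decay for $\Psi_{j}=\phi^{j}\ast K_{\alpha}$ directly (via the nice observation that $\partial^{\gamma}\phi$ carries $|\gamma|-1$ vanishing moments), and then exploit the vanishing moments of the compactly supported $a$ rather than those of $I_{\alpha}a$. This is more self-contained --- no Taibleson--Weiss and no Nakai--Sawano black box --- and it also makes transparent why the extra moments on $a$ are needed (you only use $L\geq k+\lceil\alpha\rceil$, as you note). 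The paper's approach, by contrast, is more modular: once one knows $I_{\alpha}a$ is a molecule, the maximal estimate follows from a reusable lemma that applies to any operator mapping atoms to molecules.
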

\begin{proof} We observe that $2 \lfloor n (\frac{1}{q_0} -1 )\rfloor +3+ \lfloor \alpha \rfloor+n > \lfloor n(\frac{1}{p_0}-1) \rfloor$, thus $a(\cdot)$ is an atom with additional vanishing moments. 

The same argument utilized to obtain the pointwise estimate that appears in (\ref{Tmalpha}) works if we consider the operator $I_{\alpha}$ instead $T_{m, \alpha}$, so
\begin{equation}
|I_{\alpha}a(x)| \leq C_{|\beta|}\frac{r^{n+|\beta|+1}}{\| \chi_{B} \|_{p(\cdot)}} |x-x_0|^{-n +\alpha - |\beta| +1 }, \label{I alpha estimate}
\end{equation}
for all  $x \in \mathbb{R}^{n}\setminus B(x_0, 2r),$ and all $0 \leq |\beta| \leq 2 \lfloor n (\frac{1}{q_0} -1 )\rfloor +3+ \lfloor \alpha \rfloor+n $. Taking
$|\beta| = 2 \lfloor n (\frac{1}{q_0} -1 )\rfloor +3+ \lfloor \alpha \rfloor+n$ in (\ref{I alpha estimate}), a simple computation gives
\begin{equation}
|I_{\alpha}a(x)| \leq C \frac{r^{\alpha}}{\| \chi_B \|_{p(\cdot)}} \left( 1 + \frac{|x-x_0|}{r} \right)^{-2n-2k-3}, \, \forall x \in \mathbb{R}^{n} \setminus B(x_0, 2r) \label{cond1}
\end{equation}
where $k= \lfloor n (\frac{1}{q_0} -1 )\rfloor$.

Let $1 < s < n/\alpha$, from the $L^{s}-L^{\frac{sn}{n - s\alpha}}$ boundedness of $I_{\alpha}$ and Remark 8, we obtain
\begin{equation}
\|I_{\alpha}a\|_{L^{{\frac{sn}{n - s\alpha}}}(B(x_0,2r))} \leq C \| a \|_{s} \leq C \frac{|B|^{\frac{n - s\alpha}{sn}} |B|^{\frac{\alpha}{n}}}{\| \chi_B \|_{p(\cdot)}}. \label{cond2}
\end{equation}
Taibleson and Weiss in \cite{T-W} proved that
\begin{equation}
\int_{\mathbb{R}^{n}} x^{\beta} I_{\alpha}a(x) dx =0, \label{cond3}
\end{equation}
for $0 \leq | \beta | \leq \lfloor n (\frac{1}{q_0} - 1) \rfloor$.

Finally, we observe that the argument utilized in Proof of Theorem 5.2 in \cite{nakai} works in this setting, but considering now the estimates (\ref{cond1}), (\ref{cond2}) and the moment condition (\ref{cond3}). Therefore we get (\ref{estimate0}).
\end{proof}

\begin{remark} If $a(\cdot)$ is a $(p(\cdot),q/p_0)$-atom such that $\int x^{\beta} a(x) dx = 0$ for all $|\beta| \leq 2 \lfloor n (\frac{1}{q_0} -1 )\rfloor +3+ \lfloor \alpha \rfloor+n $, where $\frac{1}{q_0}=\frac{1}{p_0} - \frac{\alpha}{n}$, then from the inequality in (\ref{I alpha estimate}), it follows that 
\[
|I_{\alpha}a(x)| \leq C \frac{|B|^{\frac{\alpha}{n}}}{\| \chi_{B} \|_{p(\cdot)}} [M(\chi_B)(x)]^{\frac{n+k+1}{n}}, 
\]
for all  $x \in \mathbb{R}^{n}\setminus B(x_0, 2r),$ and $k= \lfloor n (\frac{1}{q_0} -1 )\rfloor$.
\end{remark}

\begin{proposition}
For $0 < \alpha < n$, let $I_{\alpha}$ be the Riesz potential given by (\ref{riesz potential}). Let $p(\cdot) \in M\mathcal{P}_0$, with $0 < p_0< \frac{n}{n+ \alpha}$. If $w \in \mathcal{A}_1 \cap L^{\frac{p_0}{q_0}(\frac{p(\cdot)}{p_0})'}(\mathbb{R}^{n})  \cap RH_{(q/p_0)'}$, then
\[
\| I_{\alpha} f \|_{H^{q_0}(w)} \leq C \| f \|_{H^{p_0}\left( w^{p_{0}/q_{0}} \right)}, \,\,\, for \,\, all \,\, f \in H^{p_0, q/p_0}_{fin}(w),
\]
where $q > \max\{1, p_{+}, p_{0}(1 + 2^{n+3}(\| M \|_{(p(\cdot)/p_0)'}+ \| M \|_{(q(\cdot)/q_0)'})), \frac{p_0 n}{\alpha}\}$.
\end{proposition}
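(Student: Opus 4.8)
The plan is to reduce the $H^{q_0}(w)$-quasinorm to the discrete maximal function $M^d_\phi$ via Lemma 17, decompose $f$ into atoms, estimate $M^d_\phi(I_\alpha a)$ for a single atom $a$ by separating a neighbourhood of its supporting ball from its complement, and then reassemble the pieces exactly as in the proof of Proposition 16. Since $p_0<\frac{n}{n+\alpha}$ forces $\frac1{q_0}=\frac1{p_0}-\frac{\alpha}{n}>1$, i.e. $0<q_0<1$, and $w\in\mathcal{A}_1$, Lemma 17 reduces matters to bounding $\|M^d_\phi(I_\alpha f)\|_{L^{q_0}(w)}$. Write $f=\sum_{j=1}^{k}\lambda_j a_j$ with $(p(\cdot),q/p_0)$-atoms $a_j$ supported on balls $B_j=B(x_{0,j},r_j)$; enlarging $q$ if necessary and invoking the finite atomic decomposition of \cite{Uribe} with atoms of prescribed higher moment order, we may assume each $a_j$ has vanishing moments up to $2\lfloor n(\tfrac1{q_0}-1)\rfloor+3+\lfloor\alpha\rfloor+n$, so that Proposition 18 applies to every $I_\alpha a_j$. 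Then, by linearity of $I_\alpha$, subadditivity of $M^d_\phi$, $M^d_\phi g\le CMg$, and $q_0<1$,
\[
\|I_\alpha f\|_{H^{q_0}(w)}^{q_0}\le C\sum_{j=1}^{k}\lambda_j^{q_0}\int_{\mathbb{R}^{n}}\bigl[M^d_\phi(I_\alpha a_j)(x)\bigr]^{q_0}\,w(x)\,dx .
\]

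Next I would estimate, for a single atom $a=a_j$ on $B=B(x_0,r)$, the integral above by splitting $\mathbb{R}^{n}=2B\cup(\mathbb{R}^{n}\setminus 2B)$ with $2B=B(x_0,2r)$. On $\mathbb{R}^{n}\setminus 2B$, estimate (\ref{estimate0}) of Proposition 18 gives $M^d_\phi(I_\alpha a)(x)\le C|B|^{\alpha/n}\|\chi_B\|_{p(\cdot)}^{-1}[M(\chi_B)(x)]^{(n+k+1)/n}$ with $k=\lfloor n(\tfrac1{q_0}-1)\rfloor$; since $\widetilde q:=q_0\tfrac{n+k+1}{n}>1$ and $w\in\mathcal{A}_1\subset\mathcal{A}_{\widetilde q}$, Theorem 9 in \cite{Muck} yields $\int_{\mathbb{R}^{n}}[M(\chi_B)]^{\widetilde q}w\le C\,w(B)$, so the outer contribution is $\le C|B|^{\frac{\alpha}{n}q_0}\|\chi_B\|_{p(\cdot)}^{-q_0}w(B)$. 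On $2B$ I would use the Hardy--Littlewood--Sobolev inequality: put $t:=q_0q/p_0$ and $s:=\frac{tn}{n+t\alpha}$, so that for $q$ large one has $1<s<q/p_0$ and $1<t$, hence $I_\alpha\colon L^{s}\to L^{t}$ is bounded and, by Remark 8, $\|a\|_s\le|B|^{1/s}\|\chi_B\|_{p(\cdot)}^{-1}$; applying H\"older's inequality on $2B$ with the conjugate exponents $t/q_0$ and $(q/p_0)'$, the $L^{t}$-boundedness of $M$, the $L^{s}\to L^{t}$-boundedness of $I_\alpha$, and the reverse H\"older inequality $w\in RH_{(q/p_0)'}$ together with the doubling of $w\in\mathcal{A}_1$, the inner contribution is $\le C|B|^{q_0/s}\|\chi_B\|_{p(\cdot)}^{-q_0}\,|B|^{-p_0/q}w(B)$; since $\frac{q_0}{s}=\frac{p_0}{q}+\frac{\alpha q_0}{n}$ this is again $\le C|B|^{\frac{\alpha}{n}q_0}\|\chi_B\|_{p(\cdot)}^{-q_0}w(B)$. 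Adding the two pieces, $\int_{\mathbb{R}^{n}}[M^d_\phi(I_\alpha a_j)]^{q_0}w\le C|B_j|^{\frac{\alpha}{n}q_0}\|\chi_{B_j}\|_{p(\cdot)}^{-q_0}w(B_j)$.

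Feeding this back gives
\[
\|I_\alpha f\|_{H^{q_0}(w)}^{q_0}\le C\sum_{j=1}^{k}\lambda_j^{q_0}|B_j|^{\frac{\alpha}{n}q_0}\|\chi_{B_j}\|_{p(\cdot)}^{-q_0}w(B_j),
\]
which is precisely the quantity (for $m=1$, $A_1=I$, $w_{A_1^{-1}}=w$) appearing just before (\ref{desig}); re-running that argument verbatim --- rewriting it as an integral against $w$, using $\ell^{p_0}\hookrightarrow\ell^{q_0}$, the pointwise bound $|B_j|^{\alpha/n}\chi_{B_j}\le\bigl(M_{\frac{\alpha p_0}{2}}(\chi_{B_j})\bigr)^{2/p_0}$, and Lemma 13 (with $\theta=2$, exponents $2$ and $2q_0/p_0$, fractional parameter $\frac{\alpha p_0}{2}$, weight $w\in\mathcal{A}_1$) --- one arrives at
\[
\|I_\alpha f\|_{H^{q_0}(w)}^{q_0}\le C\left\|\sum_{j=1}^{k}\frac{\lambda_j^{p_0}\chi_{B_j}}{\|\chi_{B_j}\|_{p(\cdot)}^{p_0}}\right\|_{L^{1}(w^{p_0/q_0})}^{q_0/p_0}.
\]
Taking the infimum over all such (higher-order) decompositions of $f$, noting that $w^{p_0/q_0}\in\mathcal{A}_1$ by Remark 10 (as $p_0/q_0<1$) and $w^{p_0/q_0}\in L^{(p(\cdot)/p_0)'}$ since $w\in L^{\frac{p_0}{q_0}(p(\cdot)/p_0)'}$, the higher-order analogue of Lemma 7.11 in \cite{Uribe} applied with the weight $w^{p_0/q_0}$ bounds the right-hand side by $C\|f\|_{H^{p_0}(w^{p_0/q_0})}^{q_0}$, which completes the proof.

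I expect the main obstacle to be the inner estimate on $2B$: the Sobolev pair $(s,t)$ must be chosen so that the power of $|B|$ from the $L^{s}\to L^{t}$ bound combines with the power coming from the reverse H\"older/doubling estimate for $w$ to produce exactly $|B|^{\frac{\alpha}{n}q_0}$ --- the same power as in the outer estimate --- and one must check that $RH_{(q/p_0)'}$ is precisely the reverse H\"older class dictated by the H\"older-conjugate exponent and that $(s,t)$ is admissible (this is where $q>p_0n/\alpha$ and the largeness of $q$ enter). A more routine, bookkeeping-type point is arranging the atomic decomposition so that the atoms carry the extra vanishing moments required by Proposition 18 while keeping the finite atomic quasinorm under control, which is handled by the higher-order finite atomic decomposition of \cite{Uribe}.
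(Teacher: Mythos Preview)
Your argument tracks the paper's proof closely: reduce to $M^d_\phi$ via Lemma 17, pass to single atoms by subadditivity and $q_0<1$, split $\mathbb{R}^n=2B\cup(2B)^c$, treat the outer piece by Proposition 18 together with the weighted boundedness of $M$, and reassemble via $\ell^{p_0}\hookrightarrow\ell^{q_0}$, Lemma 13 and Lemma 7.11 of \cite{Uribe}, exactly as in Proposition 16 with $m=1$. The one substantive difference is your handling of the inner piece $J_1=\int_{2B}[M^d_\phi(I_\alpha a)]^{q_0}w$: the paper first applies Kolmogorov's inequality for $M$ with the $\mathcal A_1$ weight $w$ to get $J_1\le C\,w(2B)^{1-q_0}\bigl(\int_{\mathbb R^n}|I_\alpha a|\,w\bigr)^{q_0}$, and then splits that integral into a local part $L_1$ over $2B$ (Sobolev $L^s\to L^{\tilde s}$ with a freely chosen $s$, plus $w\in RH_{(q/p_0)'}\subset RH_{\tilde s'}$) and a far part $L_2$ over $(2B)^c$ (Remark 19 plus the weighted maximal inequality). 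Your direct H\"older/Sobolev/reverse-H\"older computation bypasses Kolmogorov and is shorter; but watch the exponent: with $t=q_0q/p_0$ one has $s>1$ iff $t>n/(n-\alpha)$, i.e.\ iff $q>(n-p_0\alpha)/(n-\alpha)$, and this is \emph{not} guaranteed by the lower bound on $q$ stated in the proposition (for small $p_0$ it exceeds both $1$ and $p_0n/\alpha$). The remedy is immediate and costs nothing: replace $t$ by any $t'\ge\max\{q_0q/p_0,\,n/(n-\alpha)\}+\varepsilon$; then the H\"older conjugate $(t'/q_0)'\le(q/p_0)'$ so $RH_{(q/p_0)'}\subset RH_{(t'/q_0)'}$ still applies, the corresponding $s'=t'n/(n+t'\alpha)$ lies in $(1,n/\alpha)\subset(1,q/p_0)$ (the last inclusion since $q>p_0n/\alpha$), and the identity $q_0/s'-q_0/t'=q_0\alpha/n$ still produces exactly the factor $|B|^{q_0\alpha/n}$ you need.
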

\begin{proof}
We recall that in the decomposition atomic, we can always choose atoms with additional vanishing moments. This is, if $l$ is any fixed integer with $l > \lfloor n (\frac{1}{p_0} -1) \rfloor$, then in the definition of the space $H^{p(\cdot), q}_{fin}(\mathbb{R}^{n})$ we can assume that all moments up to order $l$ of our atoms are zero. Thus, given $f \in H^{p(\cdot), q/p_0}_{fin}(\mathbb{R}^{n})$, we have that $f = \sum_{j=1}^{k} \lambda_j a_j$, where $a_j$ are atoms with moment condition up to order $2 \lfloor n (\frac{1}{q_0} -1 )\rfloor +3+ \lfloor \alpha \rfloor+n$.

By Lemma 17 and since $0 < q_0 < 1$ we have
$$\int_{\mathbb{R}^{n}} (\mathcal{M}_N(I_{\alpha} f)(x))^{q_0} w(x) dx
\leq C \int_{\mathbb{R}^{n}} (M^{d}_{\phi}(I_{\alpha} f)(x))^{q_0} w(x) dx$$
$$\leq C \sum_{j=1}^{k} \lambda^{q_0}_{j} \int_{\mathbb{R}^{n}} (M^{d}_{\phi}(I_{\alpha} a_j)(x))^{q_0} w(x) dx.$$
Thus, we estimate the last integral for an arbitrary atom $a(\cdot)$ supported on a ball $B=B(x_0,r)$. 
$$\int_{\mathbb{R}^{n}} (M^{d}_{\phi}(I_{\alpha} a)(x))^{q_0} w(x) dx$$
$$= \int_{B(x_0, 2r)} (M^{d}_{\phi}(I_{\alpha} a)(x))^{q_0} w(x) dx +
\int_{(B(x_0, 2r))^{c}} (M^{d}_{\phi}(I_{\alpha} a)(x))^{q_0} w(x) dx = J_1 + J_2.$$
We first estimate $J_1$, for them we use the fact that $M^{d}_{\phi}(I_{\alpha} a)(x) \leq M(I_{\alpha} a)(x)$, for all $x \in \mathbb{R}^{n}$. We  have that $w \in \mathcal{A}_1$, and since the Hardy-Littlewood maximal operator satisfies Kolmogorov's inequality (see \cite{duoan}), we get
$$J_1 \leq C w(B(x_0, 2r))^{1-q_0} \left( \int_{\mathbb{R}^{n}}|I_{\alpha} a(x)| w(x) dx \right)^{q_0}.$$
To get the desired estimate for $J_1$, it is will suffice to show that
$$L=\int_{\mathbb{R}^{n}}|I_{\alpha} a(x)| w(x) dx \leq C \frac{|B|^{\frac{\alpha}{n}}}{\| \chi_B\|_{p(\cdot)}} w(B(x_0, 2r)).$$
To prove this, we split the integral
$$L= \int_{B(x_0, 2r)}|I_{\alpha} a(x)| w(x) dx + \int_{B(x_0, 2r)^{c}}|I_{\alpha} a(x)| w(x) dx =L_1 + L_2.$$
To estimate $L_1$, we take $1 < s < \frac{n}{\alpha}$ such that $0 < \frac{1}{s} - \frac{\alpha}{n} < \frac{p_0}{q}$, so if $\widetilde{s}$ is defined by $\frac{1}{\widetilde{s}} =  \frac{1}{s} - \frac{\alpha}{n}$, H\"older's inequality and the $L^{s}-L^{\widetilde{s}}$ boundedness of $I_{\alpha}$ give
$$L_1 \leq \| I_{\alpha} a\|_{\widetilde{s}} \left( \int_{B(x_0, 2r)} [w(x)]^{\widetilde{s}'} dx \right)^{1/{\widetilde{s}'}} \leq C \| a\|_{s} \left( \int_{B(x_0, 2r)} [w(x)]^{\widetilde{s}'} dx \right)^{1/{\widetilde{s}'}}$$
since $1 < s < \frac{q}{p_0}$, Remark 8 gives
$$\leq C \frac{|B|^{\frac{1}{s}}}{\| \chi_B\|_{p(\cdot)}} \left( \int_{B(x_0, 2r)} [w(x)]^{\widetilde{s}'} dx \right)^{1/{\widetilde{s}'}} = C   \frac{|B|^{1+\frac{\alpha}{n}}}{\| \chi_B\|_{p(\cdot)}} \left( \frac{1}{|B|} \int_{B(x_0, 2r)} [w(x)]^{\widetilde{s}'} dx \right)^{1/{\widetilde{s}'}}$$
a computation gives $1 < \widetilde{s}' < \left( \frac{q}{p_0} \right)'$, since $w \in RH_{(q/p_0)'}$ it follows that $w \in RH_{ \widetilde{s}'}$, thus
$$L_1 \leq C \frac{|B|^{\frac{\alpha}{n}}}{\| \chi_B \|_{p(\cdot)}} w(B(x_0, 2r)).$$
To estimate $L_2$, we use Remark 19 and Theorem 9 in \cite{Muck} to obtain
$$L_2 \leq  C \frac{|B|^{\frac{\alpha}{n}}}{\| \chi_B \|_{p(\cdot)}} w(B(x_0, 2r)).$$
From the estimates of $L_1$, $L_2$ and since the weight $w$ is doubling, we have that
$$J_1 \leq C  \frac{|B|^{q_0 \frac{\alpha}{n}}}{\| \chi_B \|_{p(\cdot)}^{q_0}} w(B).$$ 
Now we estimate $J_2$. By Proposition 18 and since $w \in \mathcal{A}_1 \subset \mathcal{A}_{q_0 \frac{n+k+1}{n}}$, once again by Theorem 9 in \cite{Muck}, we obtain
$$J_2 \leq C \frac{|B|^{q_0 \frac{\alpha}{n}}}{\| \chi_B \|_{p(\cdot)}^{q_0}} \int_{\mathbb{R}^{n}} [M(\chi_B)(x)]^{q_0 \frac{n+k+1}{n}} w(x) dx$$
$$ \leq C  \frac{|B|^{q_0 \frac{\alpha}{n}}}{\| \chi_B \|_{p(\cdot)}^{q_0}} \int_{\mathbb{R}^{n}} \chi_B(x) w(x) dx = C  \frac{|B|^{q_0 \frac{\alpha}{n}}}{\| \chi_B \|_{p(\cdot)}^{q_0}}  w(B).$$ Then
$$\int_{\mathbb{R}^{n}} (M^{d}_{\phi}(I_{\alpha} a)(x))^{q_0} w(x) dx = J_1 + J_2 \leq C  \frac{|B|^{q_0 \frac{\alpha}{n}}}{\| \chi_B \|_{p(\cdot)}^{q_0}} w(B).$$ 
So
$$\| I_{\alpha} f \|_{H^{q_0}(w)}^{q_0} \leq C \int_{\mathbb{R}^{n}}  \left\{\sum_{j=1}^{k} \left(\frac{\lambda_{j}  |B_j|^{\frac{\alpha}{n}} \chi_{B_j}(x)}{\| \chi_{B_j} \|_{p(.)}}\right)^{q_0} \right\} w(x) dx
$$
the embedding $l^{p_0} \hookrightarrow l^{q_0}$ gives
\begin{equation}
\leq C  \int_{\mathbb{R}^{n}} \left\{\sum_{j=1}^{k} \left(\frac{\lambda_{j}  |B_j|^{\frac{\alpha}{n}} \chi_{B_j}(x)}{\| \chi_{B_j} \|_{p(.)}}\right)^{p_0} \right\}^{\frac{q_0}{p_0}} w(x) dx \label{desig2}
\end{equation}
a computation allows us to obtain $|B_j|^{\frac{\alpha}{n}} \chi_{B_j}(x) \leq \left(M_{\frac{\alpha p_0}{2}} (\chi_{B_j})(x)\right)^{\frac{2}{p_0}}$, so (\ref{desig2})
$$
\leq C \int_{\mathbb{R}^{n}} \left\{\sum_{j=1}^{k} \left( \frac{\lambda_{j}  \left(M_{\frac{\alpha p_0}{2}} (\chi_{B_j})(x)\right)^{\frac{2}{p_0}}}{\| \chi_{B_j} \|_{p(.)}}\right)^{p_0} \right\}^{\frac{q_0}{p_0}} w(x) dx
$$
$$
= C \left\| \left\{\sum_{j=1}^{k} \frac{\lambda_{j}^{p_0}  \left(M_{\frac{\alpha p_0}{2}} (\chi_{B_j})(.)\right)^{2}}{\| \chi_{B_j} \|_{p(.)}^{p_0}} \right\}^{\frac{1}{2}} \right\|^{\frac{2q_0}{p_0}}_{L^{\frac{2q_0}{p_0}}(w)}
$$
because $\frac{p_0}{2q_0} = \frac{1}{2} - \frac{\alpha p_0}{2n}$ and $w^{\frac{p_0}{2q_0}} \in \mathcal{A}_{2, \frac{2q_0}{p_0}}$, by Lemma 13 we have
$$
\leq C \left\| \left\{\sum_{j=1}^{k} \frac{\lambda_{j}^{p_0}  \chi_{B_j}(.)}{\| \chi_{B_j} \|_{p(.)}^{p_0}} \right\}^{\frac{1}{2}} \right\|^{\frac{2q_0}{p_0}}_{L^{2}\left(w^{{p_0}/{q_0}} \right)}
$$
$$
=C  \left\| \sum_{j=1}^{k} \frac{\lambda_{j}^{p_0}  \chi_{B_j}(.)}{\| \chi_{B_j} \|_{p(.)}^{p_0}}  \right\|^{\frac{q_0}{p_0}}_{L^{1}\left(w^{{p_0}/{q_0}} \right)}.
$$
Since $w^{\frac{p_0}{q_0}} \in \mathcal{A}_1 \cap L^{(\frac{p(\cdot)}{p_0})'}(\mathbb{R}^{n})$ (see Lemma 1 and  Remark 10), and  $H^{p_0, q/p_0}_{fin}(w^{p_0/q_0})=H^{p_0, q/p_0}_{fin}(w) = H^{p(\cdot), q/p_0}_{fin}(\mathbb{R}^{n})$ as sets,  by Lemma 7.11 in \cite{Uribe}, we can take the infimum over all such decompositions to get
\[
\| I_{\alpha} f \|_{H^{q_0}(w)} \leq C \| f \|_{H^{p_0}\left( w^{p_{0}/q_{0}} \right)},
\]
for all $f \in H^{p_0, q/p_0}_{fin}(w)$.
\end{proof}

\section{Main Results}

In the sequel, we will consider $0 \leq \alpha < n$, a measurable function $p(\cdot) : \mathbb{R}^{n} \rightarrow (0, \infty)$ such that $0 < p_0 < p_{-} \leq p_{+} < \frac{n}{\alpha}$, with $0< p_0< \frac{n}{n+ \alpha}$ and $q(\cdot)$ defined by $\frac{1}{q(\cdot)} = \frac{1}{p(\cdot)} - \frac{\alpha}{n}$. If $\left(\frac{q(\cdot)}{q_0}\right)' \in M\mathcal{P}_{0}$, where $\frac{1}{q_0} = \frac{1}{p_0} - \frac{\alpha}{n}$, then by Lemma 4 it follows that $\left(\frac{p(\cdot)}{p_0}\right)' \in M\mathcal{P}_{0}$ because $\left(\frac{p(\cdot)}{p_0}\right)' = \frac{q_0}{p_0} \left(\frac{q(\cdot)}{q_0}\right)'$.
In the definition of the space $H^{p(\cdot), q/p_0}_{fin}(\mathbb{R}^{n})$ we will assume  $q > \max\{1, p_{+}, p_{0}(1 + 2^{n+3}\| M \|_{(p(\cdot)/p_0)'})\}$ if $\alpha=0$, or $q > \max\{1, p_{+}, p_{0}(1 + 2^{n+3}(\| M \|_{(p(\cdot)/p_0)'}+ \| M \|_{(q(\cdot)/q_0)'})), \frac{p_0 n}{\alpha}\}$ if $0 < \alpha < n$.

\begin{theorem}
For $0 \leq \alpha < n$ and $m >1$, let $T_{\alpha, m}$ be the operator defined by $($\ref{T}$)$. Given a measurable function $p(\cdot) : \mathbb{R}^{n} \rightarrow (0, \infty)$ such that $0 < p_0 < p_{-} \leq p_{+} < \frac{\alpha}{n}$, with $0 < p_0 < \frac{n}{n+ \alpha}$, define the function $q(\cdot)$ by $\frac{1}{q(\cdot)} = \frac{1}{p(\cdot)} - \frac{\alpha}{n}$. If $\left( \frac{q(\cdot)}{q_0} \right)' \in M\mathcal{P}_0$, where $\frac{1}{q_0} = \frac{1}{p_0} - \frac{\alpha}{n}$ and $q(A_ix) =q(x)$ for all $x$ and all $i=1, ...,m$, then $T_{\alpha, m}$ can be extended to an $H^{p(.)}\left( \mathbb{R}^{n}\right) - L^{q(.)}\left(\mathbb{R}^{n}\right)$ bounded operator.
\end{theorem}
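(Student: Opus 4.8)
The plan is to obtain the theorem from Proposition 16 by a Rubio de Francia extrapolation argument, combined with the duality for variable Lebesgue spaces (Lemma 2) and the density of the finite atomic space. First I would reduce to an a priori estimate: a finite sum of $(p(\cdot),q/p_0)$-atoms is a compactly supported function in $L^{q/p_0}$, on which $T_{\alpha,m}$ acts as a genuine integral, so it suffices to prove $\|T_{\alpha,m}f\|_{q(\cdot)}\le C\,\|f\|_{H^{p(\cdot)}}$ for $f\in H^{p(\cdot),q/p_0}_{fin}(\mathbb{R}^{n})$; since $H^{p(\cdot),q/p_0}_{fin}(\mathbb{R}^{n})$ is dense in $H^{p(\cdot)}(\mathbb{R}^{n})$ (Theorem 7.1 in \cite{Uribe}, applicable because $q$ is chosen large) and $L^{q(\cdot)}$ is complete, this gives the bounded extension.

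Fix such an $f$ and set $F=T_{\alpha,m}f$. By Lemma 1(4), $\|F\|_{q(\cdot)}=\||F|^{q_0}\|_{q(\cdot)/q_0}^{1/q_0}$, and since $(q(\cdot)/q_0)_-=q_-/q_0>1$ (because $p_0<p_-$ forces $q_0<q_-$) while $(q(\cdot)/q_0)_+<\infty$, Lemma 2 reduces the problem to bounding $\int_{\mathbb{R}^{n}}|F(x)|^{q_0}h(x)\,dx$ uniformly over nonnegative $h\in L^{(q(\cdot)/q_0)'}$ with $\|h\|_{(q(\cdot)/q_0)'}\le 1$. Given such an $h$, I would run the Rubio de Francia iteration on $X=L^{(q(\cdot)/q_0)'}$ --- the maximal operator is bounded on $X$ by the hypothesis $(q(\cdot)/q_0)'\in M\mathcal{P}_0$ --- producing $w:=\sum_{k\ge0}(2\|M\|_X)^{-k}M^{k}h$ with $h\le w$, $\|w\|_X\le 2$, and $w\in\mathcal{A}_1$ with $[w]_{\mathcal{A}_1}\le 2\|M\|_X$. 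Because $(q(\cdot)/q_0)'=\tfrac{p_0}{q_0}(p(\cdot)/p_0)'$, we get $w\in\mathcal{A}_1\cap L^{\frac{p_0}{q_0}(p(\cdot)/p_0)'}$; in the case $\alpha=0$ one checks in addition, via Lemma 12 and $[w]_{\mathcal{A}_1}\le 2\|M\|_{(p(\cdot)/p_0)'}$, that the imposed lower bound on $q$ forces $w\in RH_{(q/p_0)'}$. So Proposition 16 applies to $w$, and using $H^{p_0,q/p_0}_{fin}(w)=H^{p(\cdot),q/p_0}_{fin}(\mathbb{R}^{n})$ as sets,
\[
\int_{\mathbb{R}^{n}}|F|^{q_0}h\;\le\;\int_{\mathbb{R}^{n}}|F|^{q_0}w\;=\;\|T_{\alpha,m}f\|_{L^{q_0}(w)}^{q_0}\;\le\;C\Big(\sum_{i=1}^{m}\|f\|_{H^{p_0}([w_{A_i^{-1}}]^{p_0/q_0})}\Big)^{q_0}.
\]

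The remaining step is to bound each $\|f\|_{H^{p_0}([w_{A_i^{-1}}]^{p_0/q_0})}$ by $\|f\|_{H^{p(\cdot)}}$ with a constant free of $h$. Writing this Hardy norm via $M_\phi$, applying generalized H\"older's inequality for the pair $L^{p(\cdot)/p_0}$, $L^{(p(\cdot)/p_0)'}$, then Lemma 1(4) and $\|M_\phi f\|_{p(\cdot)}\simeq\|f\|_{H^{p(\cdot)}}$, I get
\[
\|f\|_{H^{p_0}([w_{A_i^{-1}}]^{p_0/q_0})}^{p_0}=\int_{\mathbb{R}^{n}}(M_\phi f)^{p_0}[w_{A_i^{-1}}]^{p_0/q_0}\le C\,\|f\|_{H^{p(\cdot)}}^{p_0}\,\big\|w_{A_i^{-1}}\big\|_{\frac{p_0}{q_0}(p(\cdot)/p_0)'}^{p_0/q_0}=C\,\|f\|_{H^{p(\cdot)}}^{p_0}\,\big\|w_{A_i^{-1}}\big\|_{(q(\cdot)/q_0)'}^{p_0/q_0},
\]
and since $q(A_ix)=q(x)$ makes $(q(\cdot)/q_0)'$ invariant under each orthogonal matrix $A_i$, Lemma 1(5) gives $\|w_{A_i^{-1}}\|_{(q(\cdot)/q_0)'}=\|w\|_{(q(\cdot)/q_0)'}\le 2$. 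Hence $\int|F|^{q_0}h\le C\|f\|_{H^{p(\cdot)}}^{q_0}$ uniformly in $h$; taking the supremum over $h$ and using Lemma 2 once more finishes the a priori estimate, and density completes the proof. The main obstacle I anticipate is exactly this uniformity: the constant produced by Proposition 16 must depend on $w$ only through $[w]_{\mathcal{A}_1}$, $\|w\|_{L^{(p(\cdot)/p_0)'}}$ and, when $\alpha=0$, $[w]_{RH_{(q/p_0)'}}$ --- the first two being uniformly controlled by the construction of the Rubio de Francia operator and the third by Lemma 12 together with the precise lower bound required of $q$. The case $\alpha=0$ is handled in the same way, with $q(\cdot)=p(\cdot)$, $q_0=p_0$, and the duality performed directly on $L^{p(\cdot)/p_0}$.
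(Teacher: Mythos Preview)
Your proposal is correct and follows essentially the same route as the paper's proof: reduce to $H^{p(\cdot),q/p_0}_{fin}$ by density, dualize via Lemma~2 on $L^{q(\cdot)/q_0}$, apply the Rubio de Francia algorithm in $L^{(q(\cdot)/q_0)'}$ to obtain an $\mathcal{A}_1$ majorant (with the $RH_{(q/p_0)'}$ property when $\alpha=0$), invoke Proposition~16, and then close the estimate by H\"older together with the invariance $q(A_ix)=q(x)$ via Lemma~1(5). Your explicit remark about the uniformity of the constant in Proposition~16 with respect to $w$ is a point the paper leaves implicit, but the resolution you give is the intended one.
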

\begin{proof} The operator $T_{\alpha, m}$ is well defined on the elements of $H^{p(\cdot), q/p_0}_{fin}(\mathbb{R}^{n})$. So given $f \in H^{p(\cdot), q/p_0}_{fin}(\mathbb{R}^{n})$, from Lemma 2, we have that
$$\| T_{\alpha, m} f\|_{q(.)}^{q_0} = \| |T_{\alpha, m} f|^{q_0} \|_{\frac{q(.)}{q_0}} \leq C  \sup \int |T_{\alpha, m} f(x)|^{q_0} |g (x)| dx$$
where the supremum is taken over all $g \in L^{(q(.)/q_0)'}$ such that $\| g \|_{\left(q(.)/q_0\right)'} \leq 1$.
Now we utilize the Rubio de Francia iteration algorithm with respect to $L^{\left(q(.)/q_0\right)'}$. Given a function $g$, define
$$\mathcal{R}g (x) = \sum_{i=0}^{\infty} \frac{M^{i}g(x)}{2^{i} \|M\|_{\left(q(.)/q_0\right)'}^{i}},$$
where $M^{0}g = g$ and, for $i \geq 1$, $M^{i}g = M \circ \cdot \cdot \cdot \circ M g$ denotes $i$ iterates of the Hardy-Littlewood maximal operator. The function $\mathcal{R}g$ satisfies: $|g(x)| \leq \mathcal{R}g(x)$ for all $x$, $\| \mathcal{R}g \|_{(q(\cdot)/q_0)'} \leq C \|g \|_{(q(\cdot)/q_0)'}$ and $\mathcal{R}g \in \mathcal{A}_1$ (if $\alpha=0$, then it also has $\mathcal{R}g \in RH_{(q/p_0)'})$; by these properties and since $H^{p(\cdot),q/p_0}_{fin}(\mathbb{R}^{n}) = H^{p_0,q/p_0}_{fin}(\mathcal{R}g)$ as sets, Proposition 16 gives
$$\int |T_{\alpha, m} f(x)|^{q_0} |g (x)| dx \leq \int |T_{\alpha, m} f(x)|^{q_0} \mathcal{R}g(x) dx$$
$$= \| T_{\alpha, m}f \|_{L^{q_0}(\mathcal{R}g)}^{q_0}
\leq C \sum_{i=1}^{m} \|f \|_{H^{p_0}\left([(\mathcal{R}g)_{A_{i}^{-1}}]^{\frac{p_0}{q_0}} \right)}^{q_0}$$
$$=C  \sum_{i=1}^{m} \left( \int [\mathcal{M}_{N}f(x)]^{p_0} [(\mathcal{R}g)_{A_{i}^{-1}}(x)]^{\frac{p_0}{q_0}} dx \right)^{\frac{q_0}{p_0}}$$
$$\leq C \| [\mathcal{M}_{N}f]^{p_0} \|_{\frac{p(.)}{p_0}}^{\frac{q_0}{p_0}} \sum_{i=1}^{m}\left\|[(\mathcal{R}g)_{A_{i}^{-1}}]^{\frac{p_0}{q_0}} \right\|_{\left(\frac{p(.)}{p_0} \right)'}^{\frac{q_0}{p_0}}$$
a computation gives $\left(\frac{p(.)}{p_0} \right)' = \frac{q_0}{p_0} \left( \frac{q(\cdot)}{q_0} \right)'$, so
$$= C \| \mathcal{M}_{N}f \|_{p(.)}^{q_0} \sum_{i=1}^{m}\left\|(\mathcal{R}g)^{\frac{p_0}{q_0}} \right\|_{\frac{q_0}{p_0}\left( \frac{q(A_i \cdot)}{q_0} \right)'}^{\frac{q_0}{p_0}}$$
since $q(A_i x) = q(x)$ for all $x \in \mathbb{R}^{n}$ and all $i=1, ..., m$ results
$$= C \|f \|_{H^{p(.)}}^{q_0} \sum_{i=1}^{m}\left\|\mathcal{R}g \right\|_{\left(\frac{q(.)}{q_0}\right)'}$$
$$\leq  C \|f \|_{H^{p(.)}}^{q_0} \left\|g \right\|_{\left(\frac{q(.)}{q_0}\right)'}.
$$ Thus
$$\| T_{\alpha, m} f\|_{L^{q(.)}} \leq C \|f \|_{H^{p(.)}},$$
for all $f \in H^{p(.), q/p_0}_{fin}(\mathbb{R}^{n})$, so the theorem follows from the density of $H^{p(.), q/p_0}_{fin}(\mathbb{R}^{n})$ in $H^{p(\cdot)}(\mathbb{R}^{n})$.
\end{proof}

\begin{remark}
Suppose $h:\mathbb{R}\rightarrow \left( 0,\infty \right) $ that satisfies the log-H\"older continuity on $\mathbb{R}$ and
$0<h_{-}\leq h_{+}<\frac{n}{\alpha }.$ Let $p(x)=h(\left\vert x\right\vert )$
for $x\in \mathbb{R}^{n}$ and for $m>1$ let $A_{1},...,A_{m}$ be $n\times n$
orthogonal matrices such that $A_{i}-A_{j}$ is invertible for $i\neq j.$ It
is easy to check that the function $p(\cdot)$ satisfies the log-H\"older continuity on $\mathbb{R}^{n}$
and also that $0<p_{-}\leq p_{+}<\frac{n}{\alpha }$ and $p(A_{i}x)\equiv p(x),$ $1\leq i\leq m.$

Another non trivial example of exponent functions and orthogonal matrices
satisfying the hypothesis of the theorem is the following:

We consider $m=2,$
$p(.):\mathbb{R}^{n}\rightarrow \left( 0,\infty \right) $ that satisfies the
log-H\"older continuity on $\mathbb{R}^{n}$, $0<p_{-}\leq p_{+}<\frac{%
n}{\alpha }$, and then we take $p_{e}(x)=p(x)+p(-x),$ $A_{1}=I$ and $%
A_{2}=-I.$
\end{remark}

\begin{remark} Observe that Theorem 21 still holds for $m=1$ and $0 < \alpha < n$. In particular, if $A_1=I$, then the Riesz potential is bounded from
$H^{p(.)}(\mathbb{R}^{n})$ into $L^{q(.)}(\mathbb{R}^{n})$.
\end{remark}

Now we study the boundedness of Riesz potentials from $H^{p(\cdot)}(\mathbb{R}^{n})$ into $H^{q(\cdot)}(\mathbb{R}^{n})$. 
\begin{theorem} Let $0 < \alpha < n$. Given $p(\cdot) : \mathbb{R}^{n} \rightarrow (0, \infty)$ a measurable function such that $0 < p_0 < p_{-} \leq p_{+} < \frac{\alpha}{n}$, with $0 < p_0 < \frac{n}{n+ \alpha}$, define the function $q(\cdot)$ by $\frac{1}{q(\cdot)} = \frac{1}{p(\cdot)} - \frac{\alpha}{n}$. If $\left( \frac{q(\cdot)}{q_0} \right)' \in M\mathcal{P}_0$, where $\frac{1}{q_0} = \frac{1}{p_0} - \frac{\alpha}{n}$, then the Riesz potential $I_{\alpha}$ can be extended to a bounded operator from $H^{p(\cdot)}(\mathbb{R}^{n})$ into $H^{q(\cdot)}(\mathbb{R}^{n})$.
\end{theorem}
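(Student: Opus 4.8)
The plan is to run the argument of Theorem 21, using Proposition 20 (the weighted $H^{p_0}(w)$–$H^{q_0}(w)$ bound for $I_\alpha$) in place of Proposition 16. First I would note that $I_\alpha$ is well defined on $H^{p(\cdot),q/p_0}_{fin}(\mathbb{R}^n)$, since every $(p(\cdot),q/p_0)$-atom is compactly supported and hence lies in $L^s$ for any $1<s<n/\alpha$; it then suffices to prove $\|I_\alpha f\|_{H^{q(\cdot)}}\le C\|f\|_{H^{p(\cdot)}}$ for $f$ in this dense subspace and extend by density. Recalling that atoms may be chosen with arbitrarily many vanishing moments, I would fix a decomposition $f=\sum_{j=1}^{k}\lambda_j a_j$ in which each $a_j$ is a $(p(\cdot),q/p_0)$-atom whose moments vanish up to order $2\lfloor n(\tfrac1{q_0}-1)\rfloor+3+\lfloor\alpha\rfloor+n$, so that Proposition 20 applies to each $a_j$. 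Since $0<q_0<1$ and $(q(\cdot)/q_0)_->1$ (which follows from $p_->p_0$), Lemma 2 applied to the exponent $q(\cdot)/q_0$ gives
\[
\|I_\alpha f\|_{H^{q(\cdot)}}^{q_0}=\bigl\|\,[\mathcal{M}_N(I_\alpha f)]^{q_0}\,\bigr\|_{q(\cdot)/q_0}\le C\sup\int_{\mathbb{R}^n}[\mathcal{M}_N(I_\alpha f)(x)]^{q_0}\,|g(x)|\,dx,
\]
the supremum running over all $g$ with $\|g\|_{(q(\cdot)/q_0)'}\le1$.

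For a fixed such $g$ I would invoke the Rubio de Francia iteration algorithm with respect to $L^{(q(\cdot)/q_0)'}$, producing $\mathcal{R}g$ with $|g|\le\mathcal{R}g$ pointwise, $\|\mathcal{R}g\|_{(q(\cdot)/q_0)'}\le2\|g\|_{(q(\cdot)/q_0)'}$, and $\mathcal{R}g\in\mathcal{A}_1$ with the quantitative bound $[\mathcal{R}g]_{\mathcal{A}_1}\le2\|M\|_{(q(\cdot)/q_0)'}$. The point that is genuinely new relative to the $0<\alpha<n$ case of Theorem 21 is that Proposition 20 also requires a reverse Hölder condition on the weight, and this is precisely where that quantitative $\mathcal{A}_1$ bound is used: by Lemma 12 it forces $\mathcal{R}g\in RH_s$ with $s=1+(2^{n+2}\|M\|_{(q(\cdot)/q_0)'})^{-1}$, and since the lower bound imposed on $q$ entails $q>p_0\bigl(1+2^{n+2}\|M\|_{(q(\cdot)/q_0)'}\bigr)$, a short computation gives $(q/p_0)'\le s$, hence $\mathcal{R}g\in RH_{(q/p_0)'}$. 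Using the identity $\tfrac{p_0}{q_0}\bigl(\tfrac{p(\cdot)}{p_0}\bigr)'=\bigl(\tfrac{q(\cdot)}{q_0}\bigr)'$, this shows that $w:=\mathcal{R}g$ belongs to $\mathcal{A}_1\cap L^{\frac{p_0}{q_0}(\frac{p(\cdot)}{p_0})'}(\mathbb{R}^n)\cap RH_{(q/p_0)'}$, exactly the hypothesis of Proposition 20.

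Finally, since $|g|\le\mathcal{R}g=w$ and $\|\mathcal{M}_N h\|_{L^{q_0}(w)}\simeq\|M_\phi h\|_{L^{q_0}(w)}$ for $w\in\mathcal{A}_1$, Proposition 20 yields
\[
\int_{\mathbb{R}^n}[\mathcal{M}_N(I_\alpha f)]^{q_0}|g|\,dx\le C\|I_\alpha f\|_{H^{q_0}(w)}^{q_0}\le C\|f\|_{H^{p_0}(w^{p_0/q_0})}^{q_0}=C\Bigl(\int_{\mathbb{R}^n}[\mathcal{M}_N f]^{p_0}\,w^{p_0/q_0}\,dx\Bigr)^{q_0/p_0}.
\]
A variable-exponent Hölder inequality with exponents $\tfrac{p(\cdot)}{p_0}$ and $\bigl(\tfrac{p(\cdot)}{p_0}\bigr)'$, then Lemma 1(4) together with the same identity $\tfrac{p_0}{q_0}\bigl(\tfrac{p(\cdot)}{p_0}\bigr)'=\bigl(\tfrac{q(\cdot)}{q_0}\bigr)'$, bound the last integral by $C\|\mathcal{M}_N f\|_{p(\cdot)}^{p_0}\,\|\mathcal{R}g\|_{(q(\cdot)/q_0)'}^{p_0/q_0}\le C\|f\|_{H^{p(\cdot)}}^{p_0}$, because $\|\mathcal{R}g\|_{(q(\cdot)/q_0)'}\le2$. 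Raising to the power $q_0/p_0$, taking the supremum over $g$, and extracting the $q_0$-th root give $\|I_\alpha f\|_{H^{q(\cdot)}}\le C\|f\|_{H^{p(\cdot)}}$ on $H^{p(\cdot),q/p_0}_{fin}(\mathbb{R}^n)$; the density of this space in $H^{p(\cdot)}(\mathbb{R}^n)$ then completes the proof. I expect the main obstacle to be precisely the middle step: one must track the $\mathcal{A}_1$ constant of $\mathcal{R}g$ through the iteration and then feed it into the sharp reverse Hölder inequality of Lemma 12 with the correct exponent, which is exactly what the lower bound on $q$ is engineered to make possible; the remainder is a faithful transcription of the proof of Theorem 21 with Proposition 20 substituted for Proposition 16.
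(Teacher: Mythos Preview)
Your proposal is correct and follows essentially the same route as the paper's own proof: dualize via Lemma~2, build the $\mathcal{A}_1$ weight $\mathcal{R}g$ by Rubio de Francia iteration, verify the reverse H\"older condition $RH_{(q/p_0)'}$ from the quantitative $\mathcal{A}_1$ bound together with Lemma~12 and the size constraint on $q$, apply Proposition~20, and finish with H\"older in $L^{p(\cdot)/p_0}$ using $(p(\cdot)/p_0)'=\tfrac{q_0}{p_0}(q(\cdot)/q_0)'$ and density. The only cosmetic difference is that you spell out the $RH_{(q/p_0)'}$ verification in more detail than the paper (which simply asserts it), and your preliminary remark about fixing atoms with extra vanishing moments is unnecessary at this level since that step is already absorbed into the proof of Proposition~20.
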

\begin{proof} The operator $I_{\alpha}$ is well defined on $H^{p(\cdot), q/p_0}_{fin}(\mathbb{R}^{n})$. So given $f \in H^{p(\cdot), q/p_0}_{fin}(\mathbb{R}^{n})$, from Lemma 2, we have that
$$\| I_{\alpha} f \|_{H^{q(\cdot)}}^{q_0} \leq C \| \mathcal{M}_N(I_{\alpha}f) \|_{L^{q(\cdot)}}^{q_0} = \| (\mathcal{M}_N(I_{\alpha}f))^{q_0} \|_{L^{\frac{q(\cdot)}{q_0}}}$$
$$\leq C \sup \int_{\mathbb{R}^{n}} (\mathcal{M}_N(I_{\alpha} f)(x))^{q_0} |g(x)| dx$$
where the supremum is taken over all $g \in L^{(q(.)/q_0)'}$ such that $\| g \|_{\left(q(.)/q_0\right)'} \leq 1$.
Now we apply the Rubio de Francia iteration algorithm with respect to $L^{(q(\cdot)/q_0)'}$. Given $g \in L^{(q(\cdot)/q_0)'}$, define $\mathcal{R}g(x) =\sum_{i=0}^{\infty} \frac{M^{i}g(x)}{2^{i} \|M\|_{\left(q(.)/q_0\right)'}^{i}}$. The function $\mathcal{R}g$ satisfies: $|g(x)| \leq \mathcal{R}g(x)$ for all $x$, $\| \mathcal{R}g \|_{(q(\cdot)/q_0)'} \leq C \|g \|_{(q(\cdot)/q_0)'}$ and $\mathcal{R}g \in \mathcal{A}_1 \cap RH_{(q/p_0)'}$ because $[\mathcal{R}g]_{\mathcal{A}_1} \leq 2 \| M\|_{(q(\cdot)/q_0)'}$ and $q >  p_{0}(1 + 2^{n+3}(\| M \|_{(p(\cdot)/p_0)'}+ \| M \|_{(q(\cdot)/q_0)'}))$; by these properties and since $H^{p(\cdot),q/p_0}_{fin}(\mathbb{R}^{n}) = H^{p_0,q/p_0}_{fin}(\mathcal{R}g)$ as sets, Proposition 20 gives
$$\int_{\mathbb{R}^{n}} (\mathcal{M}_N(I_{\alpha} f)(x))^{q_0} |g(x)| dx \leq \int_{\mathbb{R}^{n}} (\mathcal{M}_N(I_{\alpha} f)(x))^{q_0} \mathcal{R}g(x) dx$$
$$=\| I_{\alpha} f \|_{H^{q_0}(\mathcal{R}g)}^{q_0} \leq C \| f \|_{H^{p_0}\left( [\mathcal{R}g]^{p_{0}/q_{0}} \right)}^{q_0}$$
$$=C\left( \int [\mathcal{M}_{N}f(x)]^{p_0} [(\mathcal{R}g)(x)]^{\frac{p_0}{q_0}} dx \right)^{\frac{q_0}{p_0}}$$
$$\leq C \| [\mathcal{M}_{N}f]^{p_0} \|_{\frac{p(.)}{p_0}}^{\frac{q_0}{p_0}} \left\|[\mathcal{R}g]^{\frac{p_0}{q_0}} \right\|_{\left(\frac{p(.)}{p_0} \right)'}^{\frac{q_0}{p_0}}$$
a computation gives $\left(\frac{p(.)}{p_0} \right)' = \frac{q_0}{p_0} \left( \frac{q(\cdot)}{q_0} \right)'$, so
$$\leq C  \|f \|_{H^{p(.)}}^{q_0} \left\|g \right\|_{\left(\frac{q(.)}{q_0}\right)'}.$$
Therefore,
$$ \| I_{\alpha} f\|_{H^{q(.)}} \leq C \|f \|_{H^{p(.)}},$$
for all $f \in H^{p(.), q/p_0}_{fin}(\mathbb{R}^{n})$, so the theorem follows from the density of $H^{p(.), q/p_0}_{fin}(\mathbb{R}^{n})$ in $H^{p(\cdot)}(\mathbb{R}^{n})$.

\end{proof}

\end{document}